\newcommand{\bi}{ \boldsymbol{i}}
\newcommand{\p}{ \mathcal{P} }
\newcommand{\A}{ \mathcal{A} }
\newcommand{\K}[1]{K_{#1}}
\newcommand{\M}[1]{M_{#1}}
\newcommand{\U}[1]{U_{#1}}
\newcommand{\D}[1]{D_{#1}}
\newcommand{\F}{ \boldsymbol{F} }
\renewcommand{\vec}{\mathrm{vec}}
\newcommand{\z}{\phantom{0}}
\newtheorem{thm}{Theorem}
\newtheorem{prop}{Proposition}
\begin{document}
\bibliographystyle{siam}

\pagestyle{myheadings}
\markboth{G. Sangalli and M. Tani}{Preconditioning for isogeometric problems}

\title {Isogeometric preconditioners based on fast solvers for the
  Sylvester equation\thanks{Version of February 22, 2016}}

\author{Giancarlo Sangalli$^\dagger$
\and  Mattia Tani\thanks{%
Universit\`a di Pavia, Dipartimento di Matematica ``F. Casorati'', 
Via A. Ferrata 1, 27100 Pavia, Italy. Emails: 
{\tt  \{sangia05,mattia.tani\}@unipv.it}}}
%\date{\documentdate}
%
\maketitle
\begin{abstract} 
We consider large linear systems arising from the isogeometric
discretization of the Poisson problem on a single-patch domain. The numerical solution of such systems is considered a challenging task, particularly when the degree of the splines employed as basis functions is high.
We consider a preconditioning strategy which is based on the solution of a Sylvester-like equation at each step of an iterative solver. We show that this strategy, which fully exploits the tensor structure that underlies isogeometric problems, is robust with respect to both mesh size and spline degree, although it may suffer from the presence of complicated geometry or coefficients. 
We consider two popular solvers for the Sylvester equation, a direct
one and an iterative one, and we discuss in detail their
implementation and efficiency for 2D and 3D problems on single-patch
or conforming multi-patch  NURBS geometries.

Numerical experiments for problems with different domain geometries are presented, which demonstrate the potential of this approach.
%A wide variety of numerical experiments is presented, which demonstrate the potential of this approach.

\end{abstract}

\begin{keywords}
 Isogeometric analysis, preconditioning, Kronecker product, Sylvester equation.
\end{keywords}

\begin{AMS}
65N30, 65F08, 65F30
\end{AMS}

\section{Introduction}

The Isogeometric method is a  computational technique  for solving
partial differential equations (PDEs). It has been proposed in  the  seminal paper 
\cite{Hughes2005} as an extension of the classical
finite element method, and is  based on the idea
of using splines or other functions constructed from splines (e.g.,
non-uniform rational B-splines, NURBS)
both for the parametrization  of  the computational domain, as it is
typically done by computer aided design software, and for the
representation of the unknown solution fields of  the PDE of interest.  
Many papers have demonstrated the effective advantage
of isogeometric methods  in various frameworks.  The interested reader
may find a detailed presentation of this idea with engineering  applications
in the book \cite{Cottrell2009}.
 
% A recent overview on the mathematical aspects of IGA is  \cite{acta},
% that covers the known mathematical theory of IGA but also contains
% an updated bibliography with references to the major contributions and applications of
% IGA in various engineering fields.

 Unlike standard finite element methods, the isogeometric method makes it possible to use
high-regularity functions. The so-called isogeometric $k$-method, 
based on splines of degree $p$ and global $C^{p-1}$
regularity, has shown significant  advantages in term of   higher accuracy per degree-of-freedom in comparison to $C^0$
finite elements of degree $p$ \cite{cottrell2007studies,acta-IGA}. However, the computational cost per
degree-of-freedom is also higher for the $k$-method, in currently
available  isogeometric codes. In practice, 
quadratic or cubic  splines are  typically preferred as they  maximise
computational efficiency. 

Standard  isogeometric codes  typically re-use  finite element technology, which is very
convenient but at the same time  not the best choice for computational
efficiency. The two fundamental
stages of a linear PDE solver are  the formation of the system matrix
$\A$ and the solution  of the linear system $\A u = b$, and   both
stages, in standard isogeometric software,  show a computational cost that grows
significantly as the degree $p$ grows. The focus of this paper is on
the second stage, that is the  linear solver.

The study of the computational efficiency of linear solvers for
isogeometric discretizations has been initiated in the papers \cite{Collier2012, Collier2013}, where it has been shown that the algorithms 
used  with the finite element method suffer of performance degradation 
when used to solve  isogeometric linear systems. Consider, for example, a Lagrangian finite element method with 
polynomial degree $p$ and $N$ degrees-of-freedom, in 3D: the
system matrix $\A$ has a storage cost of
$O(N p^3)$ non-zero terms and a solving cost by  a direct solver
 of  $O(N^2)$ floating point operations (FLOPs),
(see \cite[Section 2.3]{Collier2012}, under the assumption $N>p^6$). If, instead, we 
consider the isogeometric $k$-method with $C^{p-1}$  $p$-degree 
splines  and $N$ degrees-of-freedom, the system matrix
$\A$ has still $O(N p^3)$ non-zero entries,  but  a standard direct solver
costs $O(N^2 p^3)$ FLOPs, i.e., $p^3$ times  more than a finite
element approximation. 
% {\color{blue} La prossima frase forse la eliminerei...} 
% In the latter case, the computational time is reduced to
% $O(N^{4/3}p^3)$ for a  parallel implementation, see [Computational cost of isogeometric multi-frontal solvers on parallel distributed memory machines]. 

Iterative solvers have attracted more attention in the isogeometric
community. The effort  has been primarily on the  development of
preconditioners for the  Poisson model problem, for
arbitrary degree and continuity splines.
 As reported in \cite{Collier2013},  standard algebraic preconditioners
 (Jacobi, SSOR, incomplete factorization) commonly adopted for finite
 elements exhibit reduced performance  when
used in the context of  the  isogeometric $k$-method.  Multilevel and multigrid approaches are studied respectively in \cite{Buffa2013} and \cite{Gahalaut2013}, while advances in the theory
of   domain-decomposition based
solvers are given in,  e.g., \cite{Veiga2012,BeiraodaVeiga2013,bercovier2015overlapping}.
These papers also confirm the difficulty in achieving both \emph{robustness}
and computational \emph{efficiency} for the high-degree $k$-method. 
 In this context, we say that a preconditioner $\p$ for
the linear system  $\A u = b$ is robust if the condition number $\kappa\left(\p^{-1} \A \right) $
is bounded from above by a reasonably low number, independent of the degree or continuity of the spline space
adopted in the isogeometric discretization;  we say that a
preconditioner  is computationally  efficient if its setup and
application has a computational cost comparable to the one of the
matrix-vector product for the system matrix $\A$, i.e. $O(N
p^3)$.

More sophisticated  multigrid preconditioners have been proposed in
the recent papers \cite{Donatelli2015} and \cite{Hofreither2015}. The
latter, in particular, contains a proof of robustness, based on the
theory of \cite{Takacs2015}. 
The two works ground on the following common ingredients:  specific spectral properties of the discrete operator of the isogeometric
$k$-method and  the tensor-product structure of
isogeometric spaces.

In this paper we also exploit  the  tensor-product structure of
multivariate spline space, on a different basis. We rely on  approaches that have been developed
for the so-called Sylvester equation.

Consider the Laplace
operator with constant coefficients, on the square $[0,1]^2$, then  the tensor-product spline
Galerkin discretization leads to the system
\begin{equation}
  \label{eq:kron2dintro}
  ( \K1 \otimes \M2 + \M1 \otimes \K2) u = b,
\end{equation}
where $\K{\ell}$ and $\M{\ell}$ denote the univariate stiffness and
mass matrices in the $\ell$ direction, $\ell = 1,2$, and $\otimes$ is the Kronecker
product. Equation \eqref{eq:kron2dintro}, when reformulated as a matrix equation, 
takes the name of (generalized) Sylvester equation.
This is a well
studied problem in the numerical linear algebra literature, as it appears in
many applications, e.g. stochastic
PDEs,  control theory, etc. (see the recent survey \cite{Simoncini2013}). 
Observe that in general, for variable
coefficients, general elliptic problems, non trivial and possibly multi-patch geometry parametrization, the isogeometric system is not as in
\eqref{eq:kron2dintro}. In this case, a fast solver for
\eqref{eq:kron2dintro}  plays the role of a  preconditioner. Having this
motivation in mind, our aim is to study how the linear solvers for
the Sylvester equation perform for \eqref{eq:kron2dintro}, especially when originated by 
an  isogeometric $k$-method.

We select two among the most popular algorithms: the first is the 
fast diagonalization (FD) direct solver proposed by Lynch, Rice and Thomas in \cite{Lynch1964}, the second is
the alternating direction implicit (ADI) iterative  solver, first 
introduced in \cite{Peaceman1955} and further developed in a number of papers, among which \cite{Wachspress1963}.  The potential of 
ADI in the context of isogeometric problems has already been recognized  in \cite{Gao2013}. 

Our ultimate goal is the solution of 3D isogeometric systems, especially when high
resolution is needed. A remarkable example is the simulation of
turbulence, see e.g. \cite{bazilevs2010isogeometric}. 
Here the Poisson problem on a unit cube leads to the linear system
$$ ( \K1 \otimes \M2 \otimes \M3 + \M1 \otimes \K2 \otimes \M3 + \M1 \otimes \M2 \otimes \K3) u = b, $$
which can be efficiently solved with the FD method and a generalization of ADI.

We analyze and benchmark the proposed approaches in both 2D and 3D cases.
The results show that the FD method exceeds by far ADI in terms of computational efficiency. 
This is seen in 2D, but the gap is much wider in 3D.
Here, the FD solver count is $O(N^{4/3})$ FLOPs.
 ADI costs  $O(Np)$ FLOPs per iteration, which result in an asymptotically
lower operation counting. However, when used as a preconditioner
in benchmarks that are representative of realistic problems, the FD
solver performs orders of magnitude better. In fact, in all our
benchmarks that uses a conjugate
gradient (CG) iterative solver, the computational time spent in the
 FD preconditioner application is even lower  than the
 residual computation (multiplication of matrix $\A$ times a vector).
This surprising performance is due to the fact that the
FD solver requires dense matrix-matrix operations that
takes advantages of modern computer architecture. In particular, the
performance boost is due to the efficient use of the CPU hierarchy
cache and  memory access.
Furthermore, the FD  method is especially suited to parallelization which 
may  significantly speed up the execution time, though
this is not considered in our analysis.

Concerning robustness, in both approaches the
condition number $\kappa\left(\p^{-1} \A \right) $ depends only on the
geometry parametrization of the computational domain $\Omega$ and on
the coefficients of the differential operator. In this paper we 
study this dependence  and perform some numerical tests. We will show
that a singular mapping causes a loss of robustness, while
$\kappa\left(\p^{-1} \A \right) $ is uniformly bounded w.r.t.
the degree $p$ and mesh size  $h$ when the parametrization is
regular. In all cases, it is important to have strategies to further improve
the condition number, but this goes beyond the scope of the present
work and this will be the topic of further researches.

Finally, we show how to combine the considered preconditioners with a domain decomposition approach in order to solve multi-patch problems with conforming discretization. The overall strategy naturally inherits efficiency and robustness from the preconditioners for single-patch problems.

The outline of the paper is as follows. In Section \ref{sec:preliminaries} we 
introduce the matrices stemming from isogeometric discretization; we also 
recall the Kronecker product notation and its main properties. In
Section \ref{sec:preconditioner} we define the preconditioner and
discuss the spectral condition number of the preconditioned
system. Sections \ref{sec:2D} and \ref{sec:3D} describe how such
preconditioner can be efficiently applied in the 2D and 3D case,
respectively. 
In Section \ref{sec:multi-patch}, the multi-patch setting is discussed.
Numerical experiments are reported in Section \ref{sec:experiments}. Finally, in Section \ref{sec:conclusions} we draw the conclusions and outline future research directions.

\section{Preliminaries} \label{sec:preliminaries}

\subsection{Splines-based  isogeometric method}
\label{sec:splin-isog-meth}

We consider, as a model problem, the Poisson equation with Dirichlet
boundary conditions:
\begin{equation}\label{poisson}
\left\{
\begin{aligned}
-\text{div} ( {K} (\mathbf{x}) \nabla \mathrm{u} (\mathbf{x}) ) = f (\mathbf{x}) &\quad \text{on} \quad \Omega, \\
\mathrm{u} = 0 & \quad \text{on} \quad \partial \Omega,
\end{aligned}
\right.
\end{equation}
where $ {K}  (\mathbf{x})$ is a  symmetric positive definite matrix for each $ \mathbf{x}\in \Omega$. 
In isogeometric methods, $\Omega$ is given by a spline or NURBS
parametrization. For the sake of simplicity, we consider a
single-patch spline parametrization.

Given two positive integers $p$ and $m$, we say that $\Xi :=
\{\xi_1,\dots, \xi_{m+p+1}\}$ is an open knot vector if
\begin{equation}
  \label{eq:knot-vector}
  \xi_1 =\ldots=\xi_{p+1} < \xi_{p+2} \le \ldots \le 
\xi_{m} < \xi_{m+1}=\ldots=\xi_{m+p+1},
\end{equation}
where repeated knots are allowed, up to multiplicity $p$. Without loss of generality, we
assume $\xi_1 = 0$ and $\xi_{m+p+1} =1 $.
From the knot vector $\Xi$, B-spline functions of degree $p$ are
defined e.g. by the Cox-DeBoor recursive formula:  piecewise constants
($p=0$) B-splines are
\begin{equation}\label{eq:cox-deboor-1}
\hat B_{i,0}(\zeta) = \left \{
\begin{array}{ll}
1 & \text{if } \xi_i \leq \zeta < \xi_{i+1}, \\
0 & \text{otherwise},
\end{array}
\right.
\end{equation}
and for $p \ge 1$ the \emph{B-spline}  functions are obtained  by the recursion
\begin{equation}\label{eq:cox-deboor-2}
\hat B_{i,p}(\zeta) = \frac{\zeta - \xi_i}{\xi_{i+p} - \xi_i} \hat B_{i,p-1}(\zeta) + \frac{\xi_{i+p+1} - \zeta}{\xi_{i+p+1} - \xi_{i+1}} \hat B_{i+1,p-1}(\zeta),
\end{equation}
where $0/0 = 0$. In general,  the  B-spline functions  are  degree $p$ piecewise polynomial
with $p-r$ continuous derivative  at each knot with multiplicity
$r$. 
In this work we are primarily  interested in the  so called
$k$-refinement or isogeometric $k$-method,
see \cite{Cottrell2009}. For that, we assume that the multiplicity of all internal knots
is $1$, which corresponds to $C^{p-1}$ continuous splines.
Each B-spline $\hat B_{i,p}$ depends only on $p+2$ knots, which are collected in the \emph{local knot vector}
$$\Xi_{i,p}:= \{ \xi_{i}, \ldots,\, \xi_{i+p+1}\}.$$
When needed, we will  adopt the notation $  \hat
B_{i,p}(\zeta) = \hat B[\Xi_{i,p}] (\zeta).$ 
The support of each basis function is exactly ${\rm supp}(\hat B_{i,p}) = [\xi_i , \xi_{i+p+1}]$.  

Multivariate B-splines in dimension $d$  ($d=2,3$ are the interesting
cases) are defined from univariate B-splines by
tensorization. For the sake of simplicity, we assume that the degree $p$ and the length of the knot
vectors $\Xi^\ell = \{\xi_{\ell,1}, \ldots, \xi_{\ell,m + p + 1} \}$, is the same
in all directions $\ell =1,\ldots,d$. Then  for each multi-index  $\bi=(i_1,\ldots, i_d)$, we
introduce   the local knot vector $ \Xi_{i_1,p}^1 \times
\ldots\times \Xi_{i_d,p}^d $ and the multivariate B-spline
\begin{equation}
  \label{eq:multivariate-B-splines}
  \hat B_{\bi,p}(\boldsymbol \zeta) = \hat
    B[\Xi_{i_1,p}^1](\zeta_1) \ldots \hat
    B[\Xi_{i_d,p}^d](\zeta_d).
\end{equation}
To simplify the notation, when not needed the subscript $p$ is not
indicated. %\B

The domain $\Omega$ is
given by a $d$-dimensional single-patch spline parametrization\begin{displaymath}
  \Omega = \boldsymbol{F} ([0,1]^d), \text{ with }
  \boldsymbol{F}(\boldsymbol \zeta) = \sum_{\bi}
  \boldsymbol{C}_{\bi} \hat  B_{\bi} (\boldsymbol \zeta),
\end{displaymath}
where $  \boldsymbol{C}_{\bi} $ are the control points. Following the 
isoparametric paradigm, the basis functions $B_{\bi}$ on  $\Omega$
are defined as $B_{\bi} = \hat B_{\bi}\circ \boldsymbol{F} ^{-1}$.  
%\GS 
The isogeometric space, incorporating the homogeneous Dirichlet
boundary condition, reads
\begin{equation}\label{eq:Vh}
  V_h = \text{span}\{B_{\bi} \text{ such that }  \bi=(i_1,\ldots,
  i_d), \text{ with } 2\leq i_{\ell}\leq m-1, 1\leq \ell \leq d \}.
\end{equation}
We introduce a scalar indexing for functions in \eqref{eq:Vh} as
follows: to the each multi-index $\bi=(i_1,\ldots, i_d)$ we associate $i =
1+ \sum_{\ell = 1}^{d} (m-2)^{\ell-1}(i_{\ell}-2)$ and, with  abuse
of notation,  indicate $
B_{\bi}= B_{i}$, etc. The dimension of $V_h$ is denoted as $N=n^d$,
where $n = m-2$. Then, the Galerkin  stiffness matrix  reads 
\begin{equation}
\label{eq:A}
  \begin{aligned}
    \A_{ij} & =   \int_{\Omega} \left(\nabla B_{i}(\boldsymbol{x}) \right)^T {K} (\boldsymbol{x})
  \nabla B_{{j}}(\boldsymbol{x}) d\boldsymbol{x}\\
& =   \int_{[0,1]^d} \left(\nabla \hat{B}_i(\boldsymbol{\xi})\right)^T
Q(\boldsymbol{\xi}) \,  \nabla
\hat{B}_j \left(\boldsymbol{\xi}\right)\; d\boldsymbol{\xi}, \qquad i,j = 1,\ldots,N  
  \end{aligned}
\end{equation}
where
%\begin{equation} \label{eq:Q} Q\left(\boldsymbol{\xi}\right) = \mbox{det}\left(J_{\F}\left(\boldsymbol{\xi}\right)\right)  \left(J_{\F}\left(\boldsymbol{\xi}\right)\right)^{-T} K\left(\boldsymbol{\xi}\right) \left(J_{\F}\left(\boldsymbol{\xi}\right)\right)^{-1} \end{equation}
\begin{equation} \label{eq:Q} Q = \mbox{det}\left(J_{\F}\right)  J_{\F}^{-T} K J_{\F}^{-1} \end{equation}
and $J_{\F}$ denotes the Jacobian of $\F$. 

 The support of a B-spline in $V_h$  that does not touch $\partial
  \Omega$ intersects the support of $(2p+1)^d $ splines in $V_h$
  (including itself).  If the support of a   B-splines intersects with $\partial
  \Omega$, it overlaps at least   $(p+1)^d $ and up to  $(2p+1)^d $
  B-spline supports (including itself). 
Thus, the number of nonzeros of $\A$ is about $ (2p+1)^d N$. 
%This loss of sparsity is the reason why standard sparse direct solvers are not robust w.r.t. $p$ (see \cite{Collier2012} for details). 
% \B 

\subsection{Kronecker product}

%{\color{blue} Sostituiamo $n_a$ e $n_b$ con $q$ e $t$ (lettere non usate nel resto del paper)?}

Let $A \in \mathbb{R}^{n_a \times n_a}$, and $B \in \mathbb{R}^{n_b \times n_b}$. The \textit{Kronecker product} between $A$ and $B$ is defined as
$$ A \otimes B = \begin{bmatrix} a_{11} B & \ldots & a_{1 n_a} B \\ \vdots & \ddots & \vdots \\ a_{n_a 1} B & \ldots & a_{n_a n_a} B \end{bmatrix} \; \in \mathbb{R}^{n_a n_b \times n_a n_b}, $$
where $a_{ij}$, $i,j=1,\ldots n_a$, denote the entries of $A$. The Kronecker product is an associative operation, 
%that is $\left(A \otimes B\right) \otimes C = A \otimes \left(B \otimes C\right)$ 
and it is bilinear with respect to matrix sum and scalar multiplication.
We now list a few properties of the Kronecker product that will be useful in the following.

\begin{itemize}
\item It holds \begin{equation} \label{eq:krontranspose} (A \otimes B)^T = A^T \otimes B^T. \end{equation}
\item If $C$ and $D$ are matrices of conforming order, then 
\begin{equation} \label{eq:mixedproduct} \left(A \otimes B\right) \left(C \otimes D\right) = (AC \otimes BD ). \end{equation}
\item If $A$ and $B$ are nonsingular, then 
\begin{equation} \label{eq:kroninv} \left(A \otimes B\right)^{-1} = A^{-1} \otimes B^{-1}. \end{equation}
\item If $\lambda_i$, $i=1,\ldots,n_a$, denote the eigenvalues of $A$ and $\mu_b$, $j=1,\ldots,n_2$, denote the eigenvalues of $B$, then the $n_a n_b$ eigenvalues of $A \otimes B$ have the form 
\begin{equation} \label{eq:kroneig} \lambda_i \mu_j, \qquad i=1,\ldots,n_a, \; j=1,\ldots,n_b. \end{equation}
\end{itemize}

Property \eqref{eq:krontranspose} implies that if $A$ and $B$ are both symmetric, then $A \otimes B$ is also symmetric. Moreover, if $A$ and $B$ are both positive definite, then according to \eqref{eq:kroneig} $A \otimes B$ is also positive definite.

For any matrix $ X \in \mathbb{R}^{n_a\times n_b}$ we denote with $ \vec(X)$ the vector of $\mathbb{R}^{n_a n_b}$ obtained by ``stacking'' the columns of $X$. Then if $A$, $B$ and $X$ are matrices of conforming order, and $x = \vec(X)$, it holds
\begin{equation} \label{eq:kronprod} \left( A \otimes B\right)x = \vec( B X A ^T). \end{equation}
This property can be used to cheaply compute matrix-vector products
with a matrix having Kronecker structure. Indeed, it shows that
computing $\left( A \otimes B\right)x$ is equivalent to computing $ n_b$ matrix-vector products with $A$ and $n_a$ matrix-vector products with $B$. Note in particular that $\left( A \otimes B\right)$ does not have to be formed. 

If $A$ and $B$ are nonsingular, then \eqref{eq:kronprod} is equivalent to
\begin{equation} \label{eq:kronsolve} \left( A \otimes B\right)^{-1}x = \vec( A^{-1} X B^{-T} ), \end{equation}
which, in a similar way, shows that the problem of solving a linear
system having $\left( A \otimes B\right)$ as coefficient matrix is
equivalent to solve $ n_b$ linear systems involving $A$ and $ n_a $ linear systems involving $B$.

\subsection{Evaluation of the computational cost and efficiency}

Throughout the paper, we will primarily evaluate the computational cost of an
algorithm by counting the number of floating point operations (FLOPs)
it requires.  
A single addition, subtraction, multiplication or division performed in floating point arithmetic counts as one FLOP \cite{Golub2012}. 
The number of FLOPs associated with an algorithm is an
indication to  assess its efficiency, and it is widely employed in literature.
However, any comparison of FLOPs between different algorithms
should be interpreted with caution.  We emphasize, indeed, that the
number of FLOPs represents just a portion of the computational effort
required by an algorithm, as it does not take into account the %time required by data movement and other overheads that affect the execution time.
movement of data in the memory and other overheads that affect the
execution time. While these are  difficult to estimate, we
will discuss them when needed.
  
\section{The preconditioner} \label{sec:preconditioner}

Consider the matrix
\begin{equation} \label{eq:prec} \p_{ij} = \int_{[0,1]^d} \left(\nabla \hat{B}_i\right)^T \nabla \hat{B}_j \; d \xi, \qquad i,j = 1,\ldots,N. \end{equation}
Observe that $\p=\A$ in the special case when  $K$ and $J_{\F}$ are the identity matrices, which means in particular that $\Omega$ is the unit cube. 
For $d=2$, by exploiting the tensor product structure of the basis functions we have
$$ \p = \K1 \otimes \M2 + \M1 \otimes \K2, $$
where $\M1,\M2$ represent the mass,  and $\K1,\K2$  the stiffness
univariate matrices. 
$$ \left( \M1 \right)_{ij} = \int_0^1 \hat B[\Xi_{i}^1](\zeta_1) \, \hat B[\Xi_{j}^1](\zeta_1) \; d  \zeta_1, \qquad \left( \M2 \right)_{ij} = \int_0^1 \hat B[\Xi_{i}^2](\zeta_2) \, \hat B[\Xi_{j}^2](\zeta_2) \;  d  \zeta_2, $$
$$ \left( \K1 \right)_{ij} = \int_0^1  (\hat B[\Xi_{i}^1])'(\zeta_1) \cdot  (\hat B[\Xi_{j}^1])'(\zeta_1) \;  d  \zeta_1, $$
$$ \left( \K2 \right)_{ij} = \int_0^1 (\hat B[\Xi_{i}^2])'(\zeta_2) \cdot  (\hat B[\Xi_{j}^2])'(\zeta_2) \;  d  \zeta_2. $$
Such matrices are all symmetric positive definite and banded with
bandwidth $p$ (we say that a matrix $B$ has bandwidth $p$ if $B_{ij}
= 0$ for $\left|i-j\right| > p$). These matrices  have the same order
$n=m-2$, where $m$ is the dimension of the univariate spline space
(see also  from \eqref{eq:knot-vector}). Similarly, when $d=3$
$$ \p = \K1 \otimes \M2 \otimes \M3 + \M1 \otimes \K2 \otimes \M3 + \M1 \otimes \M2 \otimes \K3. $$
By comparing \eqref{eq:A} and \eqref{eq:prec}, observe that $\p_{ij} \neq 0$ if and only if $\A_{ij} \neq 0$. Thus, despite having different entries in general, $\A$ and $\p$ have the same sparsity pattern.

We propose $\p$, defined in \eqref{eq:prec}, as a preconditioner for
the isogeometric matrix $\A$. 
In other words, we want to precondition a problem with arbitrary
geometry and coefficients with a solver for the same operator on the
parameter domain, with constant coefficients. This is a common
approach, see e.g. \cite{Gao2013}, \cite{Donatelli2015} and
\cite{Hofreither2015}.

Note that, according to \eqref{eq:krontranspose} and \eqref{eq:kroneig}, $\p$ is symmetric and positive definite,
and hence we can use it as preconditioner for the conjugate gradient (CG)
method. At each CG iteration, we need to solve a system of the form
\begin{equation} \label{eq:preceq} \p s = r, \end{equation}
where $r$ is the current residual.  Due to the structure of $\p$,
\eqref{eq:preceq} is a Sylvester-like equation.  How to efficiently solve this
system for $d=2$ and $d=3$, employing solvers for Sylvester
equation, will be the topic of Sections \ref{sec:2D} and \ref{sec:3D}.
In this section we discuss the effects of geometry and coefficients on
the overall CG convergence. 
The next proposition provides an upper bound for the spectral
condition number of $\p^{-1} \A $. 

\begin{prop} \label{prop:bound1}

It holds
\begin{equation} \label{eq:bound} \kappa\left(\p^{-1} \A \right) \leq \frac{ \displaystyle \sup_{\Omega} \lambda_{\max}\left(Q\right)}{ \displaystyle \inf_{\Omega} \lambda_{\min}\left(Q\right)}, 
%\leq \frac{\displaystyle \sup_{\Omega}\left( \lambda_{\max}\left(K\right) \frac{1}{\sigma_1}\prod_{i=2}^d \sigma_i\right)} {\displaystyle \inf_{\Omega} \left(\lambda_{\min}\left(K\right) \frac{1}{\sigma_d}\prod_{i=1}^{d-1} \sigma_i\right) } 
\end{equation}
%
%where $\sigma_1, \ldots, \sigma_d$ are the singular values of $J_F$.
where the  matrix $Q$ is given in \eqref{eq:Q}.

\end{prop}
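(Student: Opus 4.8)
The plan is to reduce the spectral condition number bound to a pointwise (in the parameter domain) estimate on the quadratic forms associated with $\A$ and $\p$. Since both $\A$ and $\p$ are symmetric positive definite, $\kappa(\p^{-1}\A)$ equals the ratio of the largest to the smallest generalized eigenvalue of the pencil $(\A,\p)$, which by the Courant--Fischer (Rayleigh quotient) characterization is
\begin{equation*}
\kappa\left(\p^{-1}\A\right) = \frac{\displaystyle\max_{v\neq 0}\frac{v^T\A v}{v^T\p v}}{\displaystyle\min_{v\neq 0}\frac{v^T\A v}{v^T\p v}}.
\end{equation*}
So it suffices to bound $v^T\A v / v^T\p v$ from above and below by the extreme eigenvalues of $Q$.

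First I would write, for an arbitrary coefficient vector $v = (v_i)$, the associated function $\hat v_h = \sum_i v_i \hat B_i$ on the parameter domain $[0,1]^d$. Then, using the integral representations \eqref{eq:A} and \eqref{eq:prec}, I get
\begin{equation*}
v^T\A v = \int_{[0,1]^d} \left(\nabla \hat v_h\right)^T Q(\boldsymbol\xi)\,\nabla\hat v_h \; d\boldsymbol\xi, \qquad v^T\p v = \int_{[0,1]^d} \left(\nabla\hat v_h\right)^T \nabla\hat v_h \; d\boldsymbol\xi = \int_{[0,1]^d} \left|\nabla\hat v_h\right|^2 d\boldsymbol\xi.
\end{equation*}
Since $Q(\boldsymbol\xi)$ is symmetric positive definite for each $\boldsymbol\xi$ (being a congruence transformation of the SPD matrix $K$ scaled by the positive factor $\det J_{\F}$), for every vector $w\in\mathbb{R}^d$ one has the pointwise sandwich $\lambda_{\min}(Q(\boldsymbol\xi))\,|w|^2 \le w^T Q(\boldsymbol\xi) w \le \lambda_{\max}(Q(\boldsymbol\xi))\,|w|^2$. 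Applying this with $w = \nabla\hat v_h(\boldsymbol\xi)$ and integrating over $[0,1]^d$ gives
\begin{equation*}
\left(\inf_\Omega \lambda_{\min}(Q)\right) v^T\p v \;\le\; v^T\A v \;\le\; \left(\sup_\Omega \lambda_{\max}(Q)\right) v^T\p v,
\end{equation*}
where I use that the parametrization $\F$ is a bijection from $[0,1]^d$ onto $\Omega$, so the essential sup/inf of the eigenvalues of $Q$ over the parameter domain coincide with those taken over $\Omega$. Dividing through, every generalized Rayleigh quotient lies in the interval $[\inf_\Omega\lambda_{\min}(Q),\ \sup_\Omega\lambda_{\max}(Q)]$, and taking the ratio of the extremes yields \eqref{eq:bound}.

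I do not expect a serious obstacle here; the argument is essentially a change-of-variables plus a pointwise matrix inequality. The only points requiring a little care are: (i) checking that $\p$ is indeed SPD so that the generalized eigenvalue / Rayleigh quotient machinery applies — but this was already noted in the text via \eqref{eq:krontranspose} and \eqref{eq:kroneig}; (ii) making sure the map $v\mapsto\hat v_h$ is a bijection between $\mathbb{R}^N$ and the discrete space, which holds since the $\hat B_i$ are linearly independent; and (iii) a mild regularity assumption on $\F$ guaranteeing that $\inf_\Omega\lambda_{\min}(Q) > 0$ and $\sup_\Omega\lambda_{\max}(Q) < \infty$, i.e., that the bound is finite — this is exactly the ``regular parametrization'' hypothesis alluded to in the introduction, and a singular mapping is precisely what makes the right-hand side blow up.
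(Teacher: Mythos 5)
Your proposal is correct and follows essentially the same route as the paper's proof: express $v^T\A v$ and $v^T\p v$ as integrals over the parameter domain, sandwich the integrand pointwise by the extreme eigenvalues of $Q$, and conclude via the Courant--Fischer characterization of the generalized Rayleigh quotient. The extra remarks on positive definiteness and regularity of $\F$ are consistent with the paper's discussion and do not change the argument.
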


\begin{proof} 

Let $u = (u_1, \ldots, u_N)^T \in \mathbb{R}^n$, and define $u_h = \sum_{i=1}^{N} u_i \hat{B}_i$. Then it holds
\begin{eqnarray*} u^T \A u & = &  \int_{[0,1]^d} \left(\nabla u_h \right)^T Q \; \nabla u_h  \; d \xi \leq \int_{[0,1]^d} \lambda_{\max}\left(Q\right) \left\| \nabla u_h \right\|^2 \; d \xi \\[5pt]
& \leq & \sup_{\Omega} \lambda_{\max}\left(Q\right) \int_{[0,1]^d} \left\| \nabla u_h \right\|^2 \; d \xi = \sup_{\Omega} \lambda_{\max}\left(Q\right) \; u^T \p u. \end{eqnarray*} 
By the Courant-Fischer theorem, we infer $\lambda_{\max}\left(\p^{-1} \A \right) \leq \displaystyle \sup_{\Omega} \lambda_{\max}\left(Q\right)$. With analogous calculations one can show that $\lambda_{\min}\left(\p^{-1} \A \right) \geq \displaystyle \inf_{\Omega} \lambda_{\min}\left(Q(\xi)\right)$, and hence $\kappa\left(\p^{-1} \A \right) \leq  \frac{\displaystyle\sup_{\Omega} \lambda_{\max}\left(Q\right)}{\displaystyle \inf_{\Omega} \lambda_{\min}\left(Q\right)} $.
\end{proof}

Proposition \ref{prop:bound1} states a useful and well-known result, that
formalizes an intuitive fact. As long as the considered problem does not depart much
from the model problem on the square with constant coefficients, the
right-hand side of \eqref{eq:bound} will be small and the
preconditioner is expected to perform well. On the other hand, if the
eigenvalues of $Q$ vary widely, due to the presence of complicated
geometry or coefficients, the preconditioner performance
decreases. In these cases, it is useful to have  strategies to improve
the spectral conditioning of $\p^{-1} \A$: this is a topic that we
will address in a forthcoming paper.
We emphasize that bound \eqref{eq:bound} does not depend neither on the mesh size nor on the spline degree, but only on ${\F}$ and $K$.

Furthermore Proposition \ref{prop:bound1} allows one to compare the strategies proposed in this paper with different approaches which do not rely on the preconditioner $\p$. This might be helpful not only from the theoretical point of view, but also from a practical perspective. Indeed, during the process of assembling the stiffness matrix $\A$, the matrix
$Q$ can be  evaluated at all quadrature points of the mesh and  the extreme eigenvalues of $Q$  can be computed
in order to estimate the right-hand side of \eqref{eq:bound}. This
leads to  a reliable estimate of $\kappa\left(\p^{-1} \A \right)$ before attempting to solve the system. 
If another solver is available, which does not suffer from complicated
geometry or coefficients, then a smart software could use the estimate
on $\kappa\left(\p^{-1} \A \right)$ to automatically choose which
method is more suited to solve the system at hand. 

\section{The 2D case} \label{sec:2D}

When $d=2$, equation \eqref{eq:preceq} takes the form 
\begin{equation} \label{eq:kron2d} \left(\K1 \otimes \M2 + \M1 \otimes \K2 \right) s = r. \end{equation}
Using relation \eqref{eq:kronprod}, we can rewrite this equation in matrix form
\begin{equation} \label{eq:sylvester} \M2 S \K1 + \K2 S \M1 = R, \end{equation}
where $\vec(S) = s$ and $\vec(R) = r$. Equation \eqref{eq:sylvester} takes the name of (generalized) Sylvester equation.
Due to its many applications, the literature dealing with Sylvester equation (and its variants) is vast, and a number of methods have been proposed for its numerical solution. We refer to \cite{Simoncini2013} for a recent survey on this subject.

In the last two decades, the research on Sylvester equation has mainly focused on methods 
which require that the right-hand side matrix $R$ has low rank.
Such methods are nor considered in this work.
Indeed, even if there are cases where  $R$ is low-rank or can be
approximated efficiently by a low-rank matrix, this is not
the general case. Furthermore, and perhaps more
important, low-rank methods are  designed for solving very large
problems, where even storing the solution $S$ might be
unfeasible. This is not the case of
problems of practical interest in isogeometric analysis.

In this paper, we consider two among the most studied methods, which in the authors' perspective seem the most suited for the particular features of IGA problems. 
% In this paper, we consider instead two among the ``classical'' and most studied methods, which in authors' perspective seem the most suited for the particular features of IGA problems.
The fast diagonalization (FD) method is a direct solver,  which means that $s =
\p^{-1}r$ is computed exactly. The alternating direction implicit
(ADI) method  is an iterative solver, which means that $s$ is computed only
approximately. We remark that ADI was first applied to IGA problems in \cite{Gao2013}.

To keep the notation consistent with the rest of the paper, in this
section we will favor the Kronecker formulation \eqref{eq:kron2d} with
respect to the matrix equation form \eqref{eq:sylvester}.

\subsection{The 2D fast diagonalization method} \label{sec:direct2d}

We describe a direct method for \eqref{eq:kron2d} that was first presented in 1964 by Lynch, Rice and Thomas \cite{Lynch1964} as a method for solving elliptic partial differential equations discretized with finite differences. Following \cite{Deville2002}, we refer to it as the fast diagonalization (FD) method.
We remark that this approach was extended to a general Sylvester equation involving nonsymmetric matrices by Bartels and Stewart in 1972 \cite{Bartels1972}, although this is not considered here. 

%Due to its popularity in the context of the Sylvester equation, we refer to this approach as the Bartels-Stewart method, despite focusing on its symmetric variant.

We consider the generalized eigendecomposition of the matrix pencils $(\K1,\M1)$ and $(\K2,\M2)$, namely 
\begin{equation} \label{eq:eigen2d} \K1 \U1 = \M1 \U1 \D1 \qquad \K2 \U2 = \M2 \U2 \D2, \end{equation}
where $\D1$ and $\D2$ are diagonal matrices whose entries are the eigenvalues of $\M1^{-1}\K1$ and $\M2^{-1}\K2$, respectively, while $\U1$ and $\U2$ satisfy
%
%\begin{equation} \label{eq:Morthogonality} 
$$ \U1^T \M1 \U1 = I, \qquad \U2^T \M2 \U2 = I, $$%\end{equation}
%
%By using we \eqref{eq:eigen2d} and \eqref{eq:Morthogonality}, we can factorize $\p$ in ??:
which implies in particular $\U1^{-T} \U1^{-1} = \M1$ and $\U2^{-T}
\U2^{-1} = \M2$, and also,  from \eqref{eq:eigen2d},  $\U1^{-T}
\D1  \U1^{-1} = \K1$ and $\U2^{-T} \D2
\U2^{-1} = \K2$. Therefore we  factorize $\p$ in \eqref{eq:kron2d} as follows:
$$ \left(\U1 \otimes \U2 \right)^{-T} \left(\D1 \otimes I + I
  \otimes \D2 \right)\left(\U1 \otimes \U2\right)^{ -1} s = r, $$ 
and adopt the following strategy:

\begin{algorithm}
\caption{FD direct method (2D)}\label{FD}
\begin{algorithmic}[1]
\State Compute the generalized eigendecompositions \eqref{eq:eigen2d}
\State Compute $\tilde{r} = (\U1 \otimes \U2) ^{ T} r $
\State Compute $\tilde{s} = \left(\D1 \otimes I + I \otimes \D2 \right)^{-1} \tilde{r} $
\State Compute $s = (\U1 \otimes \U2) \tilde{s} $
\end{algorithmic}
\end{algorithm}

\textbf{Computational cost.} The exact cost of the eigendecompositions in
line 1  depends on
the algorithm employed.  
We refer to \cite[Chapter 8]{Golub2012}, \cite[Section 5.3]{Demmel1997} and references therein for an overview of the state-of-the-art methods.
A simple approach is to first compute the Cholesky factorization $\M1
= L L^T $ and the symmetric matrix $\widetilde{\K1} = L^{-1} \K1
L^{-T}$. Since $\M1$ and $\K1$ are banded, the cost of these
computations is O($p n^2$) FLOPs. The eigenvalues of $\widetilde{\K1}$
are the same of \eqref{eq:eigen2d}, and once the matrix
$\widetilde{\U1}$ of orthonormal eigenvectors is computed then one can compute $\U1 =
L^{-T} \widetilde{\U1} $, again at the cost of O($p n^2$) FLOPs. Being $\widetilde{\U1}$  orthogonal, then $\U1^T \M1 \U1 =
I_n$.
If the eigendecomposition of $\widetilde{\K1}$ is computed using a divide-and-conquer method, the cost of this operation is roughly $4 n^3$ FLOPs. We remark that the divide-and-conquer approach is also very suited for parallelization.
In conclusion, by this approach, line 1 requires roughly $8 n^3$ FLOPs. 

Lines 2 and 4 each involve a matrix-vector product with a matrix
having Kronecker structure, and each step is equivalent (see \eqref{eq:kronprod}) to
$2$ matrix products involving dense $n \times n$ matrices. The total
computational cost
of both steps is $8n^3$ FLOPs. Line 3 is just a diagonal scaling, and
its $O(n^2)$ cost is negligible.  
We emphasize that the overall computational cost of Algorithm \ref{FD} is independent of $p$. 

If we  apply Algorithm \ref{FD} as a preconditioner, then Step 1 may
be performed only once, since the matrices involved do not change
throughout the CG iteration. In this case the main cost can be quantified
in approximately  $ 8 n^3$ FLOPs per CG iteration. The other main computational effort of each CG
iteration is the residual computation, that is the  product of the system matrix $\A $ by a vector,
whose cost in FLOPs is twice the number of nonzero entries of  $\A $, that is approximately
$ 2(2p+1)^2 n^2$. In conclusion, the  cost ratio between the preconditioner
application and the residual computation  is about $4n/(2p+1)^2\approx n/p^2$. 

\subsection{The ADI method}

The ADI method was originally proposed in 1955 by Peaceman and
Racheford as a method to solve elliptic and parabolic differential
equations with two space variables \cite{Peaceman1955} by a
structured finite difference discretization. 
An important contribution to the early development of ADI is due to Wachspress and collaborators \cite{Wachspress1962} \cite{Wachspress1963} \cite{Lu1991} \cite{Ellner1991}. 
A low-rank version of ADI, which is not considered here, was first proposed in 2000 by Penzl \cite{Penzl2000}, 
and since then has become a very popular approach, see e.g. \cite{Damm2008} \cite{Benner2009} \cite{Benner2013} \cite{Kressner2014}.
For more details on the classical ADI method, we refer to the monograph \cite{Wachspress2013}.

The $j-$th iteration of the ADI method applied to equation \eqref{eq:kron2d} reads:
\begin{subequations} \label{eq:ADI}
\begin{eqnarray} 
\left(\left(\K1 + \omega_j \M1 \right) \otimes \M2 \right)  s_{j - 1/2} & = & r - \left( \M1 \otimes \left(\K2 - \omega_k \M2 \right) \right) s_{j-1}, \\
 \left( \M1 \otimes \left(\K2 + \gamma_j \M2 \right) \right) s_{j} & = & r - \left( \left(\K1 - \gamma_j \M1 \right) \otimes \M2  \right) s_{j-1/2}, 
\end{eqnarray} 
\end{subequations}
where $\gamma_j,\omega_j \in \mathbb{R}$ are acceleration parameters, which will be discussed in the next section. 
Note that at each ADI iteration we need to solve two linear systems
where the coefficient matrix has a Kronecker product structure, and this
can be done efficiently by \eqref{eq:kroninv}.

\subsubsection{Convergence analysis}

Let $J $ denote the total number of ADI iterations, and let $e_J = s - s_J$, where $s$ is the exact solution of \eqref{eq:kron2d}, denote the final error. Then it holds
\begin{equation} \label{eq:ADIerr} e_{J} = M^{-1/2} T_J M^{1/2} e_0, \end{equation}
where
\begin{equation} \label{eq:Tj} T_J = \prod_{j = 1}^{J} 
  \left(\widetilde{\K1} - \gamma_j I \right)  \left(\widetilde{\K1} +
    \omega_j I  \right)^{-1} \otimes \left(\widetilde{\K2} + \gamma_j I  \right)^{-1}\left(\widetilde{\K2} - \omega_j I \right), \end{equation}
with $M = \M1 \otimes \M2 $, $\widetilde{\K1} = \M1^{-1/2} \K1 \M1^{-1/2}$ and $\widetilde{\K2} = \M2^{-1/2} \K2 \M2^{-1/2}$.

Given a vector $v \in \mathbb{R}^{N}$, its $M-$norm is defined as 
$ \left\|v\right\|_M := \sqrt{v^T M v}$.
Note that $\left\|v\right\|_M  = \left\|M^{1/2} v \right\|$. 
With this definition, from \eqref{eq:ADIerr} we infer that
$$ \frac{\left\|e_J\right\|_M}{\left\|e_0\right\|_M} \leq \left\|T_J\right\|. $$
If $\Lambda\left(\M1^{-1} \K1\right) \subseteq [a,b]$ and $\Lambda\left(\M2^{-1} \K2\right) \subseteq [c,d]$, then it holds
\begin{equation} \label{eq:Tjnorm}
\left\|T_J\right\| 
% = \max_{\lambda \in \Lambda\left(\M1^{-1}\K1\right), \, \mu \in \Lambda\left(\M2^{-1}\K2\right) } \prod_{j=1}^{J} \left|\frac{\lambda - \gamma_j}{\lambda + \omega_j} \cdot \frac{\mu - \omega_j}{\mu + \gamma_j} \right| 
\leq \max_{\lambda \in [a,b], \, \mu \in [c,d] } \prod_{j=1}^{J} \left|\frac{\lambda - \gamma_j}{\lambda + \omega_j} \cdot \frac{\mu - \omega_j}{\mu + \gamma_j} \right|.
\end{equation}

An explicit expression for the parameters $\gamma_j,\omega_j$, $j = 1, \ldots, J$ which minimize the right-hand side of \eqref{eq:Tjnorm} is known \cite{Wachspress1963}.
When these parameters are selected 
%(whose computation requires the knowledge of the extreme eigenvalues of $\M1^{-1} \K1$ and $\M2^{-1} \K2$)  
the convergence behavior of ADI is well-understood. In particular, if we assume for simplicity that $[a,b] = [c,d]$, then the number of ADI iterations needed to ensure that $\left\|T_J\right\| \leq \epsilon $, for small enough $\epsilon$, is 
\begin{equation} \label{eq:ADIits} J = \left \lceil\frac{1}{\pi^2}
    \ln \left(4 \frac{b}{a}\right) \ln \left(\frac{4}{\epsilon}\right)
    \right \rceil, \end{equation}
where $\left \lceil\cdot  \right \rceil $ denotes the integer round toward positive infinity. We emphasize that the dependence of $J$ on the spectral condition number of the matrices involved is logarithmic, and hence extremely mild. 
Moreover, once a tolerance $\epsilon$ is chosen, the number of ADI iterations can be selected a priori according to \eqref{eq:ADIits}. 

\subsubsection{The algorithm}

A simple trick to reduce the computational cost of each ADI iteration is to define
$$  \tilde{s}_{i} = \left\{\begin{array}{ll} \left( \M1 \otimes I_n \right) s_{i}  & \mbox{for } i \in \mathbb{N}, \\
 \left(I_n \otimes  \M2\right)  s_{i} & \mbox{for } i \notin \mathbb{N}.
\end{array}\right. $$
Equations \eqref{eq:ADI} now read
%
%\begin{subequations} 
\begin{eqnarray*} 
%\label{eq:ADI3}
\left(\left(\K1 + \omega_j \M1 \right) \otimes I_n \right) \tilde{s}_{j - 1/2}  & = & r - \left(I_n \otimes \left(\K2 - \omega_j \M2 \right)\right) \tilde{s}_{j-1},  \\
\left(I_n \otimes \left(\K2 + \gamma_j \M2 \right)\right)  \tilde{s}_{j} & = & r - \left(\left(\K1 - \gamma_j \M1 \right) \otimes I_n  \right)\tilde{s}_{j-1/2}.
\end{eqnarray*} 
%\end{subequations}

We now summarize the steps of the ADI method.

\begin{algorithm}
\caption{ADI method}\label{ADI}
\begin{algorithmic}[1]
\State Fix the number of iterations $J$ according to \eqref{eq:ADIits}
\State Compute the parameters $\omega_j$, $\gamma_j$, $j=1,\ldots,J$.
\State Set $\tilde{s}_0 = 0$.
\For{$j=1,\ldots,J$} 
\State Set $r_{j-1} = r - \left(I_n \otimes \left(\K2 - \omega_j \M2 \right)\right) \tilde{s}_{j-1}$.
\State Solve $\left(\left(\K1 + \omega_j \M1 \right) \otimes I_n \right) \tilde{s}_{j - 1/2}  = r_{j-1} $.
\State Set $ r_{j-1/2} = r - \left(\left(\K1 - \gamma_j \M1 \right) \otimes I_n  \right)\tilde{s}_{j-1/2} $.
\State Solve $ \left(I_n \otimes \left(\K2 + \gamma_j \M2 \right)\right)  \tilde{s}_{j} = r_{j-1/2} $.
\EndFor
%\State Set $s_J = \vec\left(\widetilde{S}_J \M1^{-1}\right) $.
\State Set $s_J = \left(\M1 \otimes I_n \right)^{-1} \tilde{s}_J $.
\end{algorithmic}
\end{algorithm}

\subsubsection{ADI as a preconditioner}

Our main interest is to apply ADI as a preconditioner. We observe that, if we take as initial guess $s_0=0$ then equality \eqref{eq:ADIerr} can be rewritten as 
$$ s_{J} = 
%M^{-1/2}\left(I - T_k\right) M^{1/2} s = 
M^{-1/2}\left(I - T_J\right) M^{1/2} \p^{-1} r =:
\p_J^{-1} r .
$$
Hence performing $J$ iterations of the ADI method to the system $\p s = r$ is equivalent to multiply $r$ by the matrix $\p_J^{-1} = M^{-1/2}\left(I - T_J\right) M^{1/2} \p^{-1}$.
We emphasize that such equivalency is just a theoretical tool, and it is never used to actually compute $s_J$. We also emphasize that, since the number of iterations $J$ is fixed by the chosen tolerance $\epsilon$, the preconditioner $\p_J^{-1}$ does not change between different CG iterations.

In order to apply ADI as a preconditioner for CG, two issues has to be addressed. 
First, the CG method may break down if an arbitrary preconditioner is considered; in order to safely use the ADI method, we need to show that $\p_J$ is symmetric and positive definite.
Second, the choice of the tolerance $\epsilon$ for ADI is crucial and has to be discussed. Indeed, a tolerance that is too strict yields unnecessary work, while a tolerance that is too loose may compromise the convergence of CG.  

The following theorem addresses both issues. In particular, it presents a nice and simple upper bound for the spectral conditioning of $\p_J^{-1} \A $ in terms of $\epsilon$ and of the conditioning of the exactly preconditioned system $\p^{-1} \A$. 
A proof of this theorem, which generalizes the results of \cite[Section 3]{Wachspress1963}, can be found in \cite{Wachspress2013}. We give a proof of this theorem in our notation, to keep the present manuscript as self-contained as possible.

\begin{thm} \label{ADIprec}

The ADI preconditioner $\p_J$ (with optimal parameters) is positive definite. Moreover, if $\left\|T_J\right\| \leq \epsilon$, then it holds
\begin{equation} \label{eq:eps} \kappa\left( \p^{-1}_{J} \A \right) \leq \left(\frac{1 + \epsilon}{1 - \epsilon}\right) \kappa\left( \p^{-1} \A \right). \end{equation} 

\end{thm}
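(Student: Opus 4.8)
The plan is to work entirely in the $M$-inner product, where $\p_J$ and $\A$ become symmetric operators whose spectral properties are easy to control. First I would observe that, by the identity $\p_J^{-1} = M^{-1/2}(I - T_J)M^{1/2}\p^{-1}$ and the factorization $\p = M^{1/2}(\widetilde{\K1}\otimes I + I\otimes\widetilde{\K2})M^{1/2}$, the matrix $M^{1/2}\p_J^{-1}M^{-1/2} = (I-T_J)(\widetilde{\K1}\otimes I + I\otimes\widetilde{\K2})^{-1}$. Since the optimal ADI parameters are positive (this is where the hypothesis ``optimal parameters'' enters), each factor $(\widetilde{\K1}-\gamma_j I)(\widetilde{\K1}+\omega_j I)^{-1}$ commutes with $\widetilde{\K1}$ and is a function of it; likewise for $\widetilde{\K2}$. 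Hence $T_J$ commutes with $\widetilde{\K1}\otimes I + I\otimes\widetilde{\K2}$, and in the joint eigenbasis of $(\widetilde{\K1},\widetilde{\K2})$ both operators are simultaneously diagonal. On the eigenpair $(\lambda,\mu)$ with $\lambda,\mu>0$, the operator $M^{1/2}\p_J^{-1}M^{-1/2}$ acts as the scalar $(1 - t_{\lambda\mu})/(\lambda+\mu)$, where $t_{\lambda\mu} = \prod_j \frac{\lambda-\gamma_j}{\lambda+\omega_j}\cdot\frac{\mu-\omega_j}{\mu+\gamma_j}$ is the corresponding eigenvalue of $T_J$, and $|t_{\lambda\mu}|\le\|T_J\|\le\epsilon<1$.

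Positive definiteness of $\p_J$ then follows: $\p_J$ is $M$-symmetric (its $M^{1/2}$-conjugate is diagonal hence symmetric, so $\p_J = M\,\p_J^{-1}{}^{-1}\cdots$ — more cleanly, $M^{1/2}\p_J M^{-1/2}$ is the diagonal operator with entries $(\lambda+\mu)/(1-t_{\lambda\mu})$), and since $\lambda+\mu>0$ and $1-t_{\lambda\mu}\ge 1-\epsilon>0$, every such entry is strictly positive. Therefore $\p_J$ is symmetric positive definite with respect to the standard inner product as well (an $M$-symmetric, $M$-positive matrix is ordinary-symmetric positive definite, since $\p_J = M^{1/2}(M^{1/2}\p_J M^{-1/2})M^{-1/2}$ is similar to a symmetric positive matrix via the symmetric positive square root $M^{-1/2}$). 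This licenses the use of $\p_J$ as a CG preconditioner.

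For the condition number bound I would compare $\p_J^{-1}\A$ with $\p^{-1}\A$ via a multiplicative perturbation. Write $\p_J^{-1}\A = (\p_J^{-1}\p)(\p^{-1}\A)$. The first factor, conjugated by $M^{1/2}$, is the diagonal operator $\mathrm{diag}\big((1-t_{\lambda\mu})^{-1}\big)$ — wait, rather: $M^{1/2}\p_J^{-1}\p\, M^{-1/2}$ acts on $(\lambda,\mu)$ as $\frac{1-t_{\lambda\mu}}{\lambda+\mu}\cdot(\lambda+\mu) = 1 - t_{\lambda\mu}$? No: I should be careful — $\p_J^{-1}\p$ conjugated is $(I-T_J)(\widetilde{\K1}\otimes I+I\otimes\widetilde{\K2})^{-1}\cdot(\widetilde{\K1}\otimes I+I\otimes\widetilde{\K2}) = I - T_J$. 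So $\p_J^{-1}\p = M^{-1/2}(I-T_J)M^{1/2}$, a matrix similar to $I-T_J$, whose eigenvalues lie in $[1-\epsilon,\,1+\epsilon]$. Now for any vector $v$, setting $w=\p^{-1}\A v$ and using the $\A$-inner product (recall $\A$ is SPD), one estimates the Rayleigh-type quotient
\begin{equation*}
\frac{(\p_J^{-1}\A v, v)_{\A}}{(v,v)_{\A}} = \frac{((\p_J^{-1}\p)\,w, w)_{\p}}{(w,w)_{\p}}\cdot\frac{(w,w)_{\p}}{(v,v)_{\A}}\,,
\end{equation*}
and more directly: since $\p_J^{-1}\A$ and $\p^{-1}\A$ are both self-adjoint in the $\A$-inner product, and $\p_J^{-1}\A = (\p_J^{-1}\p)(\p^{-1}\A)$ with $\p_J^{-1}\p$ self-adjoint in the $\p$-inner product with spectrum in $[1-\epsilon,1+\epsilon]$, one gets $\lambda_{\max}(\p_J^{-1}\A)\le(1+\epsilon)\lambda_{\max}(\p^{-1}\A)$ and $\lambda_{\min}(\p_J^{-1}\A)\ge(1-\epsilon)\lambda_{\min}(\p^{-1}\A)$ by a standard min–max argument on products of self-adjoint operators sharing a common inner product after conjugation. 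Dividing the two inequalities yields \eqref{eq:eps}.

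The main obstacle is the bookkeeping of inner products: $\p_J^{-1}\p$ is self-adjoint in the $\p$-inner product, $\p^{-1}\A$ is self-adjoint in the $\A$-inner product, and $\p_J^{-1}\A$ in the $\A$-inner product, so the clean ``product of commuting self-adjoint operators'' picture does not literally apply and one must instead chain Rayleigh-quotient bounds through the right sequence of norms (this is exactly the step that generalizes \cite[Section 3]{Wachspress1963}). The commutativity of $T_J$ with $\widetilde{\K1}\otimes I + I\otimes\widetilde{\K2}$, hence with $\p$ after conjugation, is what makes the spectral bounds on $\p_J^{-1}\p$ sharp and clean; establishing that commutativity carefully — and noting it requires the optimal (positive) shifts so that no factor $(\widetilde{\K\ell}+\cdot\,I)^{-1}$ is singular — is the one genuinely delicate point, and everything else is routine.
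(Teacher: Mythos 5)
Your proposal follows the paper's own route: conjugate by $M^{1/2}$, exploit that $T_J$ and $\widetilde{\p}=\widetilde{\K1}\otimes I+I\otimes\widetilde{\K2}$ are simultaneously diagonalizable (all factors are functions of $\widetilde{\K1}$ and $\widetilde{\K2}$, so this commutativity is immediate, not the delicate point you make it out to be), and compare $\p_J^{-1}\A$ with $\p^{-1}\A$ through a factor whose spectrum lies in $[1-\epsilon,1+\epsilon]$. Two details need repair. The identity you start from should be the congruence $M^{1/2}\p_J^{-1}M^{1/2}=(I-T_J)\widetilde{\p}^{-1}$, not the similarity $M^{1/2}\p_J^{-1}M^{-1/2}$, which carries an extra factor $M^{-1}$ and is not diagonal in the joint eigenbasis. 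More substantively, for positive definiteness you need $\left\|T_J\right\|<1$, and you get it from $\left\|T_J\right\|\le\epsilon<1$; but the positive-definiteness claim is made for the optimal parameters alone, without the hypothesis $\left\|T_J\right\|\le\epsilon$ (and $\epsilon<1$ is never assumed). The paper closes this unconditionally: any admissible parameters with $0<\gamma_j,\omega_j\le\min\{a,c\}$ make every factor in \eqref{eq:Tjnorm} of modulus strictly below one, and the optimal parameters minimize $\left\|T_J\right\|$, hence $\left\|T_J\right\|<1$. (This, rather than mere positivity of the shifts, is where optimality is actually used.)

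The genuine gap is the decisive inequality $\lambda_{\max}(\p_J^{-1}\A)\le(1+\epsilon)\lambda_{\max}(\p^{-1}\A)$ (and its companion for $\lambda_{\min}$). You derive it from the factorization $\p_J^{-1}\A=(\p_J^{-1}\p)(\p^{-1}\A)$ ``by a standard min--max argument on products of self-adjoint operators sharing a common inner product'', and you yourself concede that the two factors are self-adjoint with respect to different inner products, so the argument as stated does not apply; eigenvalues of products of non-commuting matrices do not multiply in general, and this is exactly the step you label routine but never carry out. The paper does it by conjugating $\p_J^{-1}\A$ with $\widetilde{\p}^{1/2}M^{1/2}$ and using that $\widetilde{\p}^{1/2}$ commutes with $I-T_J$, which yields $(I-T_J)B$ with $B=\widetilde{\p}^{-1/2}M^{-1/2}\A M^{-1/2}\widetilde{\p}^{-1/2}$ symmetric positive definite and similar to $\p^{-1}\A$; then $\lambda_{\max}\bigl((I-T_J)B\bigr)=\lambda_{\max}\bigl(B^{1/2}(I-T_J)B^{1/2}\bigr)\le(1+\epsilon)\lambda_{\max}(B)=(1+\epsilon)\lambda_{\max}(\p^{-1}\A)$, and analogously from below. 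Alternatively, your own factorization closes once you observe that $\p^{-1}\A$ is self-adjoint not only in the $\A$- but also in the $\p$-inner product, so both factors in your product are $\p$-self-adjoint and positive, and the multiplicative spectral bound becomes legitimate. Without one of these completions, the key inequality \eqref{eq:eps} remains asserted rather than proved.
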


\begin{proof}

We note that $\p_J$ is symmetric and positive definite if and only if the same holds true for 
\begin{equation} \label{eq:1} M^{1/2} \p_J^{-1} M^{1/2} = \left(I - T_J \right) \widetilde{\p}^{-1}, \end{equation}
with $\widetilde{\p} = M^{-1/2} \p M^{-1/2}$. We have already observed that $T_J$ is symmetric. Moreover, $I - T_J$ is positive definite, since $\left\|T_J\right\| < 1$.
The last inequality follows from \eqref{eq:Tjnorm}; indeed if we take any set of parameters that satisfy $0 < \gamma_j,\omega_j\leq \min \left\{a,c \right\}$, $j=1,\ldots,J$, then each factor of the product is strictly smaller than 1. Since the optimal parameters minimize $\left\|T_J\right\|$, also in this case we have $\left\|T_J\right\| < 1$.

We observe that $ \widetilde{\p} = \widetilde{\K1} \otimes I + I \otimes \widetilde{\K2} $ and $I - T_J$ share the same set of eigenvectors, and since they are both symmetric and positive definite, the product in \eqref{eq:1} is again symmetric and positive definite.

We now turn on inequality \eqref{eq:eps}. Matrix $ \p^{-1}_J \A $ is similar to
$$ \widetilde{\p}^{1/2} M^{1/2} \p^{-1}_J \A M^{-1/2} \widetilde{\p}^{-1/2} = (I -T_J) \left( \widetilde{\p}^{-1/2} M^{-1/2} \A M^{-1/2} \widetilde{\p}^{-1/2} \right), $$
where we used the fact that $\widetilde{\p}^{1/2}$ and $I - T_J$ commute, since they share the same set of eigenvectors.

It holds
\begin{eqnarray} \label{eq:leq}
\lambda_{\max}\left(\p^{-1}_J \A \right)  & = & \lambda_{\max} \left( \left(I-T_J\right)\left(\widetilde{\p}^{-1/2} M^{-1/2} \A M^{-1/2} \widetilde{\p}^{-1/2}\right)\right) \nonumber \\
%& \leq & \left\|\left(I-T_J\right)\left(\widetilde{\p}^{-1/2} M^{-1/2} \A M^{-1/2} \widetilde{\p}^{-1/2}\right) \right\| \nonumber \\
%& \leq & \left\|I - T_J\right\| \cdot \left\|\widetilde{\p}^{-1/2} M^{-1/2} \A M^{-1/2} \widetilde{\p}^{-1/2}\right\| \nonumber \\
& \leq & (1 + \epsilon) \; \lambda_{\max}\left(\widetilde{\p}^{-1/2} M^{-1/2} \A M^{-1/2} \widetilde{\p}^{-1/2}\right) \nonumber \\
& = & (1 + \epsilon) \; \lambda_{\max}\left( \p^{-1} \A \right).
\end{eqnarray}
With analogous computations one can show that
\begin{equation} \label{eq:geq} \lambda_{\min}\left(\p^{-1}_J \A \right) \geq (1 - \epsilon) \; \lambda_{\min} \left( \p^{-1} \A \right). \end{equation}
Combining \eqref{eq:geq} and \eqref{eq:leq}, inequality \eqref{eq:eps} is proved.
\end{proof}

\subsubsection{Computational cost}

Lines 1 and 2 of Algorithm \ref{ADI}, namely the computation of $J$ and of the optimal parameters, require an estimate of the minimum and maximum eigenvalue of $\M1^{-1} \K1$ and $\M2^{-1} \K2$. Since all the matrices involved are symmetric, positive definite and banded, this task can be achieved inexpensively, e.g. with a few iterations of the power method.

%Lines 5 and 7 of Algorithm \ref{ADI} consist in $2n$ matrix-vector products and 2 vector updates. Lines 6 and 8 consist
The computational effort of one full ADI iteration consists in $2n$ matrix-vector products (lines 5 and 7 of Algorithm \ref{ADI}) and the solution of $2n$ linear systems (lines 6 and 8), both of which involve banded matrices of order $n$. Since lines 5 and 7 also require a vector update of order $n^2$, the total cost of a single ADI iteration is roughly $\left(16 p + 10\right) n^2$ FLOPs. 
%Since in both cases banded matrices are involved, the total cost of a single iteration is $O(p n^2)$ FLOPs.

{\footnotesize
\begin{table}
\begin{center}
\begin{tabular}{|r|r|r|r|r|r|r|r|}
\hline
$p=1$ & $p=2$ & $p=3$ & $p=4$ & $p=5$ & $p=6$ & $p=7$ & $p=8$ \\
\hline
 $3 \cdot 10^5$ & $3 \cdot 10^5$ & $4 \cdot 10^5$ & $4 \cdot 10^5$ & $1 \cdot 10^6$ & $2 \cdot 10^6$ & $2 \cdot 10^6$ & $3 \cdot 10^6$ \\
\hline
\end{tabular}  

\caption{Spectral condition number of $\M1^{-1} \K1$, with $h=2^{-5}$.}
\label{tab:cond}
\end{center}
\end{table}
}

The cost of a single iteration has to be multiplied for the number
iterations $J$, given by \eqref{eq:ADIits}, in order to obtain the
total cost of ADI. For IGA mass and stiffness matrices, it holds $
\displaystyle \frac{b}{a} =  \kappa\left(\M1^{-1} \K1\right) \leq c_p
h^{-2} \approx c_p n^2 $, where $c_p$ is a constant that depends only
on $p$. From inverse estimates for polynomials it follows $c_p \leq p^4$, however no accurate estimate of $c_p$ is known yet.
Hence, we seek numerical evidence of the dependence of
$\kappa\left(\M1^{-1} \K1\right)$ w.r.t. $p$. The results,
obtained for $h=2^{-5}$, are reported in Table \ref{tab:cond}. We can
see that the growth of the conditioning is not dramatic, with a growth
which is weaker than the bound above and,  since we are only
interested in its logarithm, we can conclude that the
number of ADI iterations is robust w.r.t. $p$.
% for the practical situation of interest.

In conclusion the total cost of ADI is roughly $$ \left[\frac{1}{\pi^2} \ln \left(4 c_p n^2 \right) \ln \left(\frac{4}{\epsilon}\right) + 1\right] \left(16 p + 10\right) n^2 $$ FLOPs. We observe  that the cost of the preconditioner is (almost) linear w.r.t. to the number of degrees-of-freedom, and very robust w.r.t. $p$. 
Indeed, this cost has a milder dependence on $p$ than the cost of a matrix-vector product with $\A$, which is $2(2p+1)^2 n^2$ FLOPs.

\section{The 3D case} \label{sec:3D}

When $d=3$, equation \eqref{eq:preceq} takes the form 
\begin{equation} \label{eq:kron3d} \left(\K1 \otimes \M2 \otimes \M3 + \M1 \otimes \K2 \otimes \M3 + \M1 \otimes \M2 \otimes \K3 \right) s = r .\end{equation}

%As in the 2D case, we consider both a direct and an iterative method, which are generalizations of the approaches detailed in the previous section, namely the FD method and the ADI method. 
We consider generalizations of the approaches detailed in the previous section, namely the FD direct method and the ADI iterative method. 

Other approaches, which 
however
rely on a low-rank approximation of the right-hand side, can be found in \cite{Grasedyck2004}, \cite{Kressner2010} and \cite{Ballani2013}.

\subsection{The 3D fast diagonalization method}

The direct method presented in section \ref{sec:direct2d} admits a straightforward generalization to the 3D case  (see also \cite{Li2010}, where the Bartels-Stewart approach for the nonsymmetric case is extended to 3D problems). We consider the generalized eigendecompositions
\begin{equation} \label{eq:eigen3d} \K1 \U1 = \M1 \U1 \D1, \qquad \K2
  \U2 = \M2 \U2 \D2, \qquad \K3 \U3 = \M3 \U3 \D3 ,\end{equation} 
with $\D1$, $\D2$, $\D3$ diagonal matrices and
$$ \U1^T \M1 \U1 = I, \qquad \U2^T \M2 \U2 = I, \qquad \U3^T \M3 \U3 = I. $$
Then, \eqref{eq:kron3d} can be factorized as 
$$ \left(\U1 \otimes \U2 \otimes \U3\right)^{-1}\left(\D1 \otimes I \otimes I + I \otimes \D2 \otimes I + I \otimes I \otimes \D3 \right)\left(\U1 \otimes \U2 \otimes \U3 \right)^{-T} s = r,$$
which suggests the following algorithm.

\begin{algorithm}
\caption{FD direct method (3D)}\label{3Ddirect}
\begin{algorithmic}[1]
\State Compute the generalized eigendecompositions \eqref{eq:eigen3d}
\State Compute $\tilde{r} = (\U1 \otimes \U2 \otimes \U3) r $
\State Compute $\tilde{s} = \left(\D1 \otimes I \otimes I + I \otimes \D2 \otimes I + I \otimes I \otimes \D3 \right)^{-1} \tilde{r} $
\State Compute $s = (\U1 \otimes \U2 \otimes \U3)^T \tilde{s} $
\end{algorithmic}
\end{algorithm}

\subsubsection{Computational cost}

Lines 1 and 3 require $O(n^3)$ FLOPs. Lines 2 and 4, as can be
seen by nested applications of formula \eqref{eq:kronprod}, are equivalent to
performing a total of $6$ products between dense matrices of size $n \times n$ and $n \times n^2$. Thus, neglecting lower order terms the overall computational cost of Algorithm \ref{3Ddirect} is $12n^4$ FLOPs.

The direct method is even more appealing in the 3D case than it was in
the 2D case, for at least two reasons. First, the computational cost
associated with the preconditioner setup, that is  the eigendecomposition, is negligible.
This means that the main computational effort of the method consists
in a few (dense) matrix-matrix products, which are level 3 BLAS
operations and typically yield high efficiency thanks to a
dedicated implementation on modern computers by optimized usage of the
memory cache hierarchy  \cite[Chapter 1]{Golub2012}. 
%Matrix-matrix products are also naturally suited for parallelization. 
Second, in
a preconditioned CG iteration the cost for applying the preconditioner
has to be compared with the cost of the residual computation (a matrix-vector product
with $\A$) which can be quantified in approximately $2(2p+1)^3 n^3$ for
3D problems, resulting in a FLOPs ratio of the preconditioner application to
residual computation of $ (3n)/(4p^3) \approx n/p^3$. For example, if
$N=256^3$ and $p=4$, the preconditioner requires  only $3$ times more
FLOPs  than the residual computation, while for degree
$p=6$ the matrix-vector product is even more costly than the
preconditioner itself. However in  numerical tests we will see 
that, for all cases of practical interest  in 3D, the computational  time used by the
preconditioner application is far lower that the residual
computation itself. 
% \GS 
% Recall that for the former the most significant cost is from  dense
% matrix-matrix multiplications, while the latter is a sparse
% matrix-vector multiplication.  The computational time depends on the FLOPs
% count but also on the memory usage and, as mentioned
% above, dense matrix-matrix multiplications greatly benefit of modern computer architecture.  
This is because the computational time depends not only on the FLOPs
count but also on the memory usage and, as mentioned above, dense matrix-matrix multiplications
greatly benefit of modern computer architecture. 
This approach will show  largely
higher performance than the alternative ADI approach we have
considered.

\subsection{Three-variable ADI} 

Despite the clear advantages presented by the direct method discussed in the
previous section, for the sake of comparison we also consider  ADI. Indeed ADI may benefit from a 
lower FLOPs counting than the direct solver, for large $n$ and low $p$, and 
being an iterative solver we can optimize the target precision as
needed by the  preconditioning step.

However, the ADI  extension to the 3D case is not straightforward. 
Here we follow the iterative scheme proposed by Douglas in \cite{Douglas1962} (see also \cite{Wachspress1994} for a different approach) to solve \eqref{eq:kron3d}:
\begin{subequations} \label{eq:ADI3D}
\begin{eqnarray} 
\left(\mathcal{K}_1 + \omega_j M\right) s_{j-2/3} & = & 2r - \left( \mathcal{K}_1 + 2 \mathcal{K}_2 + 2 \mathcal{K}_3 - \omega_j M  \right) s_{j-1}, \\
 \left( \mathcal{K}_2 + \omega_j M \right) s_{j - 1/3} & = &  \mathcal{K}_2 s_{j-1} + \omega_j M s_{j-2/3} ,\\ 
 \left( \mathcal{K}_3 + \omega_j M \right) s_{j} & = &  \mathcal{K}_3 s_{j-1} + \omega_j M s_{j-1/3},
\end{eqnarray} 
\end{subequations}
where $\mathcal{K}_1 = \K1 \otimes \M2 \otimes \M3$, $\mathcal{K}_2 = \M1 \otimes \K2 \otimes \M3$, $\mathcal{K}_3 = \M1 \otimes \M2 \otimes \K3$, $M = \M1 \otimes \M2 \otimes \M3 $, and the $\omega_j$ are real positive parameters.
After $J$ steps, reasoning as in the 2D case, we can derive an expression for the error $e_J$ similar to \eqref{eq:ADIerr}:
$$ e_J = M^{-1/2} T_J M^{1/2} e_0, $$
where $T_J$ is a symmetric positive definite matrix that depends on $\omega_1, \ldots, \omega_J$. Hence, the relative error in the $M-$norm is bounded by the euclidean norm of $T_J$.

We assume for simplicity that $ \Lambda\left( \M1^{-1} \K1 \right) = \Lambda\left( \M2^{-1} \K2 \right) = \Lambda\left( \M3^{-1} \K3 \right) = \left[a, b\right]$. 
Then it can be shown that
$$ \left\|T_J \right\| \leq \max_{\lambda_1, \lambda_2, \lambda_3 \in \left[a, b\right] } \left| \prod_{j=1}^J \left(1 - 2 \omega_j^2 \frac{ \lambda_1 + \lambda_2 + \lambda_3 }{\left(\omega_j + \lambda_1 \right) \left(\omega_j + \lambda_2 \right) \left(\omega_j + \lambda_3 \right) } \right) \right| =: \rho_J(\omega_1,\ldots,\omega_J). $$
%
%Note that $ \rho_{J+1}(\omega_1,\ldots,\omega_{J+1}) \leq \rho_{J}(\omega_1,\ldots,\omega_{J}) $. 
Clearly we are interested in choosing the parameters $\omega_j$, $j=1, \ldots, J$ so that the right-hand side of the above inequality is minimized. However, unlike in the 2D case, no expression for the solution of such minmax problem is known, and hence we cannot rely on an optimal choice for the parameters. This makes the ADI approach less appealing than in the 2D case. 
The suboptimal choice proposed in \cite{Douglas1962} still guarantees that the number of iterations $J_0$ needed to ensure that $ \left\|T_{J_0} \right\| \leq \epsilon$ satisfies
\begin{equation} \label{eq:ADI3Diter} J_0 \; \approx \; 1.16 \; \ln\left( \frac{b}{a} \right) \ln\left(\epsilon^{-1} \right) = \displaystyle O \left(\ln\left( \frac{b}{a} \right) \ln\left(\epsilon^{-1} \right) \right), \end{equation}
Note that also in the 2D case we have that the number of iterations required to achieve convergence is $O \left(\ln\left( \frac{b}{a} \right) \ln\left(\epsilon^{-1} \right) \right)$ (cf. \eqref{eq:ADIits}). However, in 3D the constant hidden in this asymptotic estimate is significantly greater than in the 2D case. 
%For more details, we refer the reader to \cite{Douglas1962}.

In fact, our numerical experience indicates that typically the condition $\left\| T_J \right\| \leq \epsilon$ is satisfied after much fewer iterations than \eqref{eq:ADI3Diter} would suggest. To avoid unnecessary iterations, we introduce a different stopping criterion, based on the evaluation of the right hand side of \eqref{eq:ADI3D}. 
More precisely, we compute the parameters $\omega_1, \ldots, \omega_{J_0}$ according to \cite{Douglas1962} but then perform only the first $J \leq J_0$ iterations, where $J$ is the smallest index such that $\rho_J (\omega_1, \ldots, \omega_J) \leq \epsilon$.
%More precisely, at iteration $j$ we check if $\rho_j (\omega_1, \ldots, \omega_j) \leq \epsilon$, and in this case we stop the ADI iteration. 

%We emphasize that the extra cost introduced by this strategy, including the preliminary step of computing all the eigenvalues of $\M1^{-1}\K1$, $\M2^{-1}\K2$ and $\M3^{-1}\K3$, is still a reasonably small multiple of $n^3$. 

Another possible approach is to discard the suboptimal choice of \cite{Douglas1962}, and select the parameters $\omega_1, \ldots, \omega_J$ following a greedy strategy, in the spirit of the approach proposed in \cite{Penzl2000} for the two-variables ADI method. After iteration $k$, we compute
\begin{equation} \label{eq:greedy1} \left(\lambda_1^*, \lambda_2^*, \lambda_3^*\right) = \operatornamewithlimits{argmax}_{ \lambda_1, \lambda_2, \lambda_3 \in \left[a, b\right] } \left| \prod_{j=1}^{k} \left(1 - 2 \omega_j^2 \frac{ \lambda_1 + \lambda_2 + \lambda_3 }{\left(\omega_j + \lambda_1 \right) \left(\omega_j + \lambda_2 \right) \left(\omega_j + \lambda_3 \right) } \right) \right|, \end{equation}
and then $\omega_{k+1}$ is defined as the nonnegative number that minimizes the error at $\left(\lambda_1^*, \lambda_2^*, \lambda_3^*\right)$, that is
\begin{equation} \label{eq:greedy2} \omega_{k+1} = \operatornamewithlimits{argmin}_{\omega \geq 0} \left| 1 - 2 \omega^2 \frac{ \lambda_1^* + \lambda_2^* + \lambda_3^* }{\left(\omega + \lambda_1^* \right) \left(\omega + \lambda_2^* \right) \left(\omega + \lambda_3^* \right) } \right|. \end{equation}

To reduce the computational cost of one ADI iteration, we multiply the first equation of \eqref{eq:ADI3D} by $\left(I_n \otimes \M2 \otimes \M3 \right)^{-1}$, the second by $\left(\M1 \otimes I_n \otimes \M3 \right)^{-1}$ and the third by $\left(\M1 \otimes \M2 \otimes I_n \right)^{-1}$. After some algebraic manipulation we obtain
\begin{subequations} \label{eq:ADI3D2}
\begin{eqnarray} 
%\left(\left(\K1 + \omega_j \M1 \right) \otimes I_n \otimes I_n \right) s_{j}^* & = & \tilde{r} - \left(\left( \K1 - \omega_j \M1 \right) \otimes I_n  \otimes I_n \right) s_{j-1}  \nonumber \\
% \label{eq:ADI3D2a} & & - 2 \left(\M1 \otimes I_n \otimes I_n\right) \left( I_n \otimes \M2^{-1} \K2 \otimes I_n + I_n \otimes I_n \otimes \M3^{-1} \K3  \right) s_{j-1}\\
%\label{eq:ADI3D2b} \left( I_n \otimes \left(\K2 + \omega_j \M2 \right) \otimes I_n \right) s_{j}^{**} & = & \left(I_n \otimes \M2  \otimes I_n\right) \left( \left( I_n \otimes \M2^{-1} \K2 \otimes I_n \right) s_{j-1} + \omega_j s_{j}^*   \right) \\ 
%\label{eq:ADI3D2c} \left( I_n \otimes I_n \otimes \left(\K3 + \omega_j \M3 \right) \right) s_{j} & = & \left(I_n \otimes I_n \otimes \M3 \right) \left( \left( I_n \otimes I_n \otimes \M3^{-1} \K3\right) s_{j-1} + \omega_j s_{j}^{**}   \right)
\left(\left(\K1 + \omega_j \M1 \right) \otimes I_n \otimes I_n \right) s_{j}^{*} & = & \tilde{r} - \left(\left( \K1 - \omega_j \M1 \right) \otimes I_n  \otimes I_n \right) s_{j-1} - 2 \left(\M1 \otimes I_n \otimes I_n\right) \nonumber \\
 \label{eq:ADI3D2a} & & \cdot \left( I_n \otimes \M2^{-1} \K2 \otimes I_n + I_n \otimes I_n \otimes \M3^{-1} \K3  \right) s_{j-1}, \\
 \left( I_n \otimes \left(\K2 + \omega_j \M2 \right) \otimes I_n \right) s_{j}^{**} & = & \left(I_n \otimes \M2  \otimes I_n\right) \nonumber \\
\label{eq:ADI3D2b} & & \cdot \left( \left( I_n \otimes \M2^{-1} \K2 \otimes I_n \right) s_{j-1} + \omega_j s_{j}^*   \right), \\ 
\left( I_n \otimes I_n \otimes \left(\K3 + \omega_j \M3 \right) \right) s_{j} & = & \left(I_n \otimes I_n \otimes \M3 \right) \nonumber \\ 
\label{eq:ADI3D2c} & & \cdot \left( \left( I_n \otimes I_n \otimes \M3^{-1} \K3\right) s_{j-1} + \omega_j s_{j}^{**}   \right),
\end{eqnarray} 
\end{subequations}
where $\tilde{r} = 2\left(I_n \otimes \M2 \otimes \M3 \right)^{-1} r $. Note that the vectors 
$$ u_j := \left( I_n \otimes \M2^{-1} \K2 \otimes I_n \right) s_{j-1}, v_j := \left( I_n \otimes I_n \otimes \M3^{-1} \K3\right) s_{j-1}, $$ 
which both appear twice in \eqref{eq:ADI3D2}, need to be computed only once.
We consider one last trick to save some computational cost. Let 
$$b_j := \left( I_n \otimes I_n \otimes \M3^{-1} \K3\right) s_{j-1} + \omega_j s_{j}^{**},$$
then
$$ v_{j+1} = \left(I_n \otimes I_n \otimes \M3 \right)^{-1} \left( I_n \otimes I_n \otimes \left(\K3 + \omega_j \M3 \right) \right) s_{j} - \omega_j s_j = b_j - \omega_j s_j. $$
where the last equality is a consequence of equation \eqref{eq:ADI3D2c}. Hence we can use the known vectors $b_j$ and $s_j$ to inexpensively compute $v_{j+1}$.
We summarize all these considerations in Algorithm \ref{algo:ADI-3D}.

\begin{algorithm}
\caption{ 3D ADI method}\label{algo:ADI-3D}
\begin{algorithmic}[1]
\State Compute all the eigenvalues of $M_1^{-1} K_1$, $M_2^{-1} K_2$ and $M_3^{-1} K_3$.
\State Compute the number of iterations $J$ and parameters $\omega_1, \ldots, \omega_J$ such that $\rho_J (\omega_1,\ldots,\omega_J) \leq \epsilon $.
\State Compute $\tilde{r} = 2\left(I_n \otimes \M2 \otimes \M3 \right)^{-1} r$.
\State Set $s_0, v_0 = 0$.
\For{$j=1,\ldots,J$} 
%\State Set $v_j := b_{j-1} - \omega_j s_{j-1} $
%\State Set $u_j = \left( I_n \otimes \M2^{-1} \K2 \otimes I_n \right) s_{j-1} $
\State Solve $\left( I_n \otimes \M2 \otimes I_n \right) u_j = \left( I_n \otimes \K2 \otimes I_n \right) s_{j-1} $
\State Compute the right-hand side of \eqref{eq:ADI3D2a}: 
\State $r^*_j = \tilde{r} - \left(\left( \K1 - \omega_j \M1 \right) \otimes I_n  \otimes I_n \right) s_{j-1} - \left(2 \M1 \otimes I_n \otimes I_n\right) \left(u_j + v_j \right)$
\State Solve $ \left( \left(\K1 + \omega_j \M1 \right) \otimes I_n \otimes I_n \right) s_{j}^* = r_j^*$
\State Compute the right-hand side of \eqref{eq:ADI3D2b}: $ r^{**}_j = \left(I_n \otimes \M2  \otimes I_n\right) \left( u_j + \omega_j s_{j}^*   \right) $
\State Solve $ \left( I_n \otimes \left(\K2 + \omega_j \M2 \right) \otimes I_n \right) s_{j}^{**} = r^{**}_j $
\State Set $b_j = v_j + \omega_j s^{**}_j $
\State Compute the right-hand side of \eqref{eq:ADI3D2c}: $ r_j = \left(I_n  \otimes I_n \otimes \M3 \right) b_j $
\State Solve $ \left( I_n \otimes I_n \otimes \left(\K3 + \omega_j \M3 \right) \right) s_{j} = r_j $
\State Set $v_{j+1} = b_j - \omega_j s_j $
%\State If $\rho_j (\omega_1,\ldots,\omega_j) \leq \epsilon $ then Stop.
\EndFor
\end{algorithmic}
\end{algorithm}

As for the previous methods, we are mainly interested in using ADI as
a preconditioner. It can be shown that Theorem \ref{ADIprec} holds
also in the 3D case. We do not report the details, as the arguments used
in the proof are the same as in the 2D case. This means that the 3D
ADI method can be used as a preconditioner for CG and that relation
\eqref{eq:eps}  guides in choosing the inner tolerance $\epsilon$.

\subsubsection{Computational cost}
 
At each iteration, the main computational effort is represented by the solution of four linear systems (lines 5, 7, 9 and 12) and five matrix products (lines 5, 6, 8, and 11). As always, by exploiting the Kronecker structure of the matrices involved, each of these computations can be performed at a cost of $2(2p+1) n^3$ FLOPs. A careful analysis reveals that the total cost of a single ADI iteration is $ \left(36 p + 29\right) n^3$ FLOPs, where we neglected terms of order lower than $n^3$.

Unlike in the 2D case, the number of iterations $J$ is not known a priori. However, if we consider $J_0$ in \eqref{eq:ADI3Diter} as an upper bound for the number of iterations, we can bound the total computational cost can be bounded roughly by
\begin{equation} \label{eq:ADI3Dflops} 1.16 \; \ln\left( c_p n^2 \right) \ln\left(\epsilon^{-1} \right) \left(36 p + 29\right) n^3 . \end{equation}
where, as in the 2D case, we replaced $ \displaystyle \frac{b}{a}$ with $c_p n^2$. We observe that asymptotically the complexity of ADI is almost O($n^3$), that is almost linear w.r.t. the number of degrees-of-freedom. On the other hand, a closer look at \eqref{eq:ADI3Dflops} reveals that the number of FLOPs required may be actually quite large, even for small or moderate $p$.
We remark that the sequential nature of the ADI iteration makes it less suited for parallelization than the FD method.

\section{Application to multi-patch problems} \label{sec:multi-patch}

To enhance flexibility in geometry representation, typically
multi-patch parametrizations are adopted in isogeometric analysis. This
means that the domain of interest $\Omega$ is the union of patches
$\Omega_i$ such that $\Omega_i = \mathbf{F}_i([0,1]^d)$, and each
$\mathbf{F}_i$ is a spline (or NURBS) parametrization. Typically  $\Omega_i \cap \Omega_j$ is an empty set,
or a vertex, or the full common edge or the full common face (when
$d=3$) of the
patches. Furthermore we assume that the meshes are conforming, 
%that is, no hanging nodes, 
that is  for  each patch interface  $\Omega_i \cap \Omega_j$ the isogeometric
functions on $\Omega_i $ and the ones on $\Omega_j$
generate the same trace space. See, e.g., \cite{acta-IGA} for more details.

For such a configuration, we can easily combine  the approaches discussed in
the previous sections with an overlapping Schwarz
preconditioner.  For that, we need to  further split  $\Omega$ into
overlapping subdomains. We choose the subdomains as pairs of neighboring patches merged together. Precisely, let $N_s$ denotes the total number of interfaces
between neighboring patches. We define
$$ \Theta_i = \Omega_{i_1} \cup \Omega_{i_2}, \qquad i=1,\ldots,N_s, $$
where $\Omega_{i_1}$ and $\Omega_{i_2}$ are the patches which share the $i-$th interface. 
It holds $\Omega = \displaystyle \bigcup_i \Theta_i $.

%Now let $n_i$ denote the number of dofs in the $i$-th subdomain, let $R_i \in \mathcal{R}$ be the restriction matrix on these dofs, and let $\A_i = R_i \A R_i^T$. Then we consider the (exact) additive Schwarz preconditioner
Now let $R_i$ be the rectangular restriction matrix on the
degrees-of-freedom associated with the $i-$th subdomain, and let $\A_i
= R_i \A R_i^T$.The (exact) additive Schwarz preconditioner is
\begin{equation} \label{eq:Peas} \mathcal{P}_{EAS} = \sum_{i = 1}^{N_s} R_i^T \A_i^{-1} R_i, \end{equation}
and its inexact variant
\begin{equation} \label{eq:Pias} \mathcal{P}_{IAS} = \sum_{i = 1}^{N_s} R_i^T \widetilde{\A}_i^{-1} R_i, \end{equation}
where each $\widetilde{A}_i^{-1}$ is a suitable approximation of $\A_i^{-1}$.
Each $\A_i$ represents the system matrix of a discretized Poisson problem on $\Theta_i$. A crucial observation is that, under the conforming assumption, $\Theta_i$ can be considered a single-patch domain. Thus, it is possible to construct a preconditioner of the form \eqref{eq:prec} for $\A_i$, which we denote with $\p_i$. 
%A direct consequence of Proposition \ref{prop:bound1} is that
%$$ \max_{i} \;  \kappa\left(\p_i^{-1} \A_i \right) \leq \max_{i}  \frac{\displaystyle  \sup_{\Theta_i} \lambda_{\max}(Q)}{\displaystyle  \inf_{\Theta_i} \lambda_{\min}(Q)} = \max_{i} \frac{\displaystyle  \sup_{\Omega_i} \lambda_{\max}(Q)}{\displaystyle  \inf_{\Omega_i} \lambda_{\min}(Q)} $$
%where the matrix $Q$ is defined similarly as in \eqref{eq:Q}. 
Then $\p_i$ can be used to construct $\widetilde{\A}_i^{-1}$ (we can have $\widetilde{\A}_i = \p_i$, or $\widetilde{\A}_i^{-1}$ may represent a fixed number of iteration of some iterative method preconditioned by $\p_i$). 
The proposed approach is somewhat unusual in the context of domain decomposition methods. 
Indeed, it is more common to split the domain $\Omega$ into a large number of small subdomains, so that local problems can be efficiently solved by parallel architectures.
%These methods are typically employed to take adavantage of parallel architectures. Accordingly, the domain $\Omega$ is split into a large number subdomains, and local problems are solved in parallel. 
%
Here, on the other hand, the subdomains are chosen so that the basis functions of the local problems have a tensor structure that can be exploited by our preconditioner. 
The efficiency of such preconditioners, which is demonstrated numerically in the next section, makes it feasible to work with local problems whose size is comparable with that of the whole system.
Finally, we remark that the large overlap between neighboring
subdomains ensures that the outer iteration converges fast, independently of $p$ and $h$.

Extension of this approach to nonconforming discretizations would
require the use of nonconforming DD preconditioners (e.g., \cite{Kleiss2012})
instead of an overlapping Schwarz preconditioner. 

\section{Numerical experiments} \label{sec:experiments}

We now numerically show the potential of the approaches described in
Sections \ref{sec:2D} and \ref{sec:3D}. All the algorithms are
implemented in {\sc Matlab} Version 8.5.0.197613 (R2015a), with the
toolbox {\sc GeoPDEs} \cite{DeFalco2011}, on a Intel Xeon i7-5820K processor, running at 3.30 GHz, and with 64 GB of RAM. 
Although the Sylvester-based approaches are very suited for parallelization, particularly the FD method, here we benchmark sequential execution and use only one core for the experiments.

We give a few technical details on how the methods were implemented. For the FD method, we used the {\sc Matlab} function {\tt eig} to compute the generalized eigendecomposition \eqref{eq:eigen2d} and \eqref{eq:eigen3d}. For 2D ADI, the number of iterations was set according to \eqref{eq:ADIits}. The extreme eigenvalues of $\M1^{-1} \K1$ and $\M2^{-1} \K2$, which are required for computing the optimal parameters derived in \cite{Wachspress1963}, were approximated using 10 iterations of the (direct and inverse) power method. For 3D ADI, the eigenvalue computation necessary to select the parameters was again performed using {\tt eig}. In both 2D and 3D, at each ADI iteration the linear systems were solved using {\sc Matlab}'s direct solver ``backslash''. 
Finally, in 3D algorithms the products involving Kronecker matrices were performed using the function from the free {\sc Matlab} toolbox Tensorlab \cite{Sorber2014}. 
%We recall {\sc Matlab} allows implicit parallelism for some of its operations, e.g. dense matrix multiplication, {\tt eig} and the direct solver.

Although in many of the problems considered here the matrix pencils $(\K1,\M1)$, $(\K2,\M2)$ (and in the 3D case $(\K3,\M3)$) coincide, we never exploit this fact in our tests. For example, in line 1 of Algorithm 1 we always compute two eigendecompositions even if in the current problem we have $\M1 = \M2$ and $\K1 = \K2$. In this way, the computational effort reflects the more general case in which such matrices are different.

\subsection{2D experiments}

We start by considering 2D problems, and observe the performance of the FD and ADI methods on four test problems with different geometries: a square, a quarter of ring, a stretched square and plate with hole. In all problems we set $K$ as the identity matrix, since according to Proposition \ref{prop:bound1} 
the presence of coefficients and of a nontrivial geometry have an analogous impact on the difficulty of the problem.
%the presence of nontrivial geometry already provides the necessary ingredient for significant numerical tests.

The square domain is simply $[0,1]^2$, and the other domains are shown in Figure \ref{fig:2Dfigures}. 
%The plate with hole deserves further comments. In the chosen parametrization, which is the same considered in \cite[Section 4.2]{Cottrell2009}, two control points are placed in the same spacial location, namely the left upper corner, and this creates a singularity in the Jacobian of $\F$.
In the case of the plate with hole we chose the same parametrization considered in \cite[Section 4.2]{Cottrell2009}. In particular, two control points are placed in the same spacial location, namely the left upper corner, and this creates a singularity in the Jacobian of $\F$.
Thus, in this case the bound provided by Proposition \ref{prop:bound1} becomes $\kappa\left(\p^{-1}
  \A\right)\leq +\infty$, and it is hence useless.
In principle, our approaches could perform arbitrarily bad and this
problem is indeed intended to test their performance in this unfavorable case. 
In all problems, except the last one, the system $\A u = b$ represents the discretization of problem \eqref{poisson}, with $f = 2\left(x^2- x \right) + 2 \left(y^2 - y\right)$. For the plate with hole domain, we considered $f = 0$ and mixed boundary conditions.  

\begin{figure}
\begin{center}
  \begin{minipage}[b]{0.32\textwidth}
    \includegraphics[trim=9cm 0cm 9cm 0cm, clip=true, width=\textwidth]{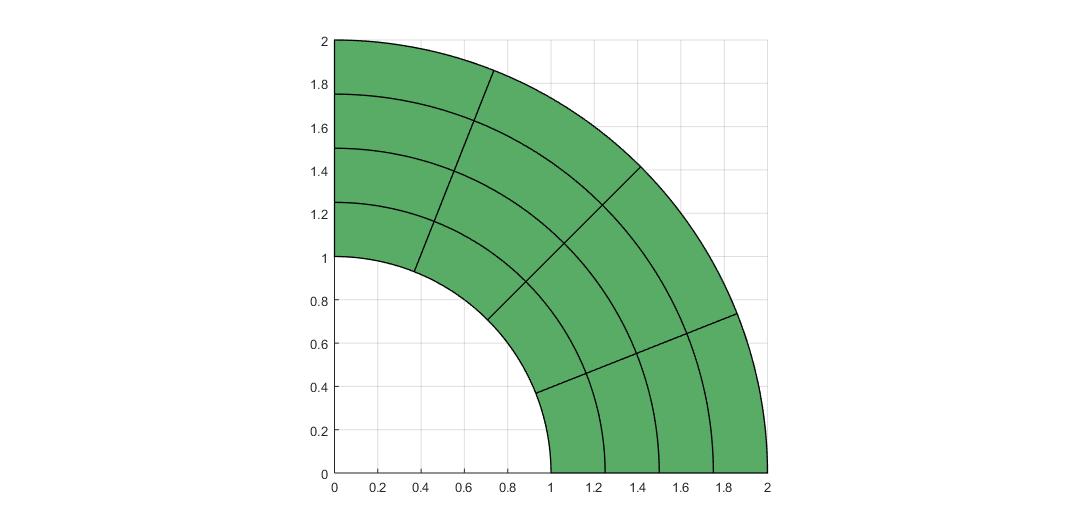}
  \end{minipage}
  \begin{minipage}[b]{0.32\textwidth}
    \includegraphics[trim=9cm 0cm 9cm 0cm, clip=true, width=\textwidth]{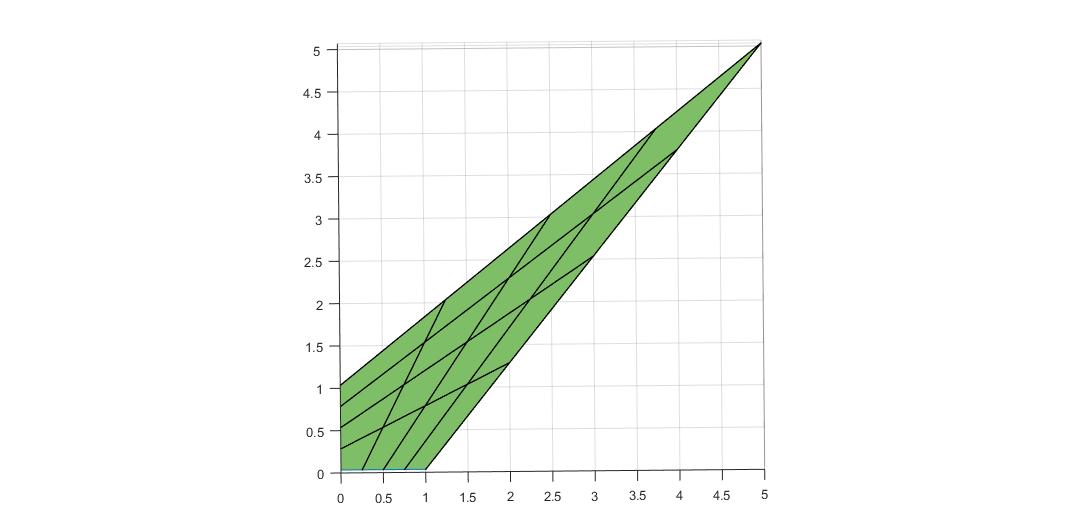}
  \end{minipage}
	  \begin{minipage}[b]{0.32\textwidth}
    \includegraphics[trim=9cm 0cm 9cm 0cm, clip=true, width=\textwidth]{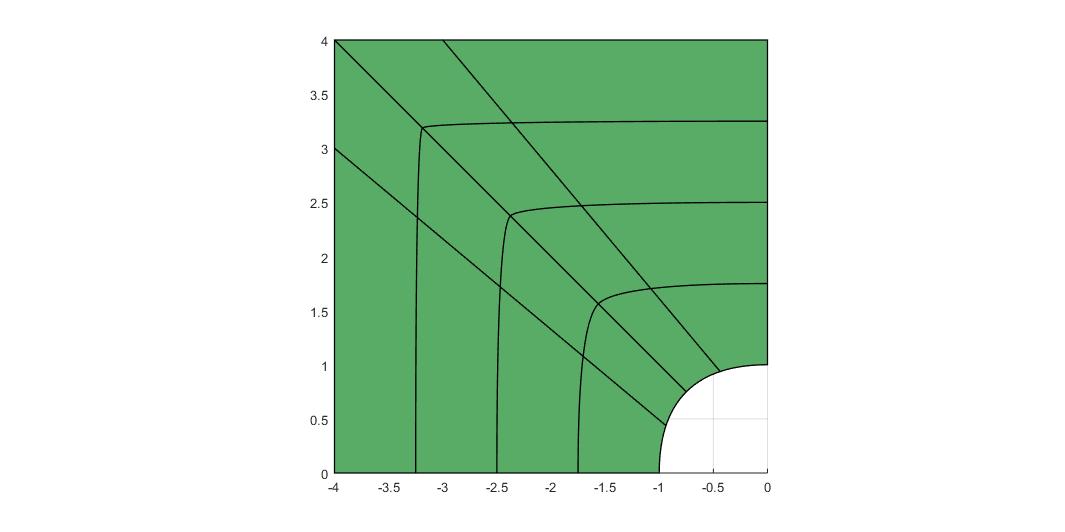}
  \end{minipage}
\end{center}
\caption{Quarter of ring, stretched square and plate with hole domains}
\label{fig:2Dfigures}
\end{figure}

We start by considering the problem on the square. As already said, in
this case $\p = \A$ and we can directly apply the considered method to
the system $\A u = b$. This is not a realistic case but serves as a
preliminary check on the proposed theory and implementations. 
Results are shown in Table \ref{tab:square}; in the upper part we report the CPU time for the FD method, while in the lower part we report the number of iterations and CPU time for ADI, whose tolerance was set to $10^{-8}$, for different values of $h$ and $p$. 

We observe that the computation time for the direct method is
substantially independent w.r.t. $p$; fluctuations in time appearing
in the finer discretization levels are due to the {\tt eig} function,
which constitute the main computational effort of the method in our implementation. 
%\footnote{ 
%{\color{blue} (HO SCRITTO QUESTA NOTA COME AVEVAMO DETTO, PERO' SONO IN DUBBIO SE METTERLA...)
%To understand this strange behavior, we explicitly computed the matrix $\widetilde{\K1} = L^{-T} \K1 L^{-1}$, with $L L^T = \M1$. This matrix is in general full, and indeed it is in many cases, although the magnitude of the entries decreases exponentially away from the diagonal. In some cases, however, the resulting $\widetilde{\K1}$ is actually banded (with a large bandwidth). This fact, probably due to roundoffs, seems to be the cause of the low CPU times which appear on the finer discretization levels for $p = 1,2,5$.
%}
%}
Similarly, computation times for ADI do not change significantly by varying $p$, and fluctuations are due to {\sc Matlab}'s direct solver.
%For ADI, the computation time increases with $p$, although the dependence is mild. 

Concerning the dependence on $h$, based on the analysis of computational cost we expect ADI to perform better than FD for small enough $h$. That is indeed what we can see in the experimental results; however, ADI starts outperforming FD for a very small value of $h$, corresponding roughly to 16 million degrees-of-freedom. 
Moreover, we emphasize that the CPU times of the two methods are comparable for all the discretization levels considered.

\begin{table}
\begin{center}
\footnotesize
\begin{tabular}{|r|c|c|c|c|c|c|}
\hline
& \multicolumn{6}{|c|}{ FD Direct Solver Time (sec)} \\
\hline
$h^{-1}$ & $p=1$ & $p=2$ & $p=3$ & $p=4$ & $p=5$ & $p=6$ \\
\hline
512 & \z\z0.18 & \z\z0.16 & \z\z\z0.16 & \z\z\z0.16 & \z\z0.16 & \z\z0.15 \\
\hline
1024 & \z\z1.52 & \z\z1.17 & \z\z\z1.02 & \z\z\z0.94 & \z\z0.95 & \z\z0.89 \\
\hline
2048 & \z10.62 & \z10.04 & \z\z11.70 & \z\z\z9.21 & \z\z7.64 & \z\z6.68 \\
\hline
4096 & \z72.73 & \z71.72 & \z127.42 & \z108.91 & \z68.91 & \z83.83 \\
\hline
8192 & 511.33 & 511.27 & 1145.04 & 1030.96 & 515.40 & 856.62 \\
\hline
\end{tabular} 
 
\vspace{0.5cm}

\begin{tabular}{|r|c|c|c|c|c|c|}
\hline
& \multicolumn{6}{|c|}{ ADI Iterations / Time (sec)} \\
\hline
$h^{-1}$ & $p=1$ & $p=2$ & $p=3$ & $p=4$ & $p=5$ & $p=6$ \\
\hline
512 & 29 / \z\z0.34 & 28 / \z\z0.33 & 29 / \z\z0.37 & 30 / \z\z0.40 & 31 / \z\z0.43 & 32 / \z\z0.45 \\
\hline
1024 & 31 / \z\z1.72 & 31 / \z\z1.56 & 32 / \z\z1.64 & 33 / \z\z1.82 & 34 / \z\z1.96 & 35 / \z\z2.05 \\
\hline
2048 & 34 / \z\z8.39 & 34 / \z11.61 & 35 / \z\z8.39 & 36 / \z10.42 & 37 / \z10.62 & 37 / \z\z9.23 \\
\hline
4096 & 37 / \z37.25 & 37 / \z52.59 & 37 / \z37.48 & 38 / \z40.42 & 39 / \z43.03 & 40 / \z40.25 \\
\hline
8192 & 40 / 160.91 & 39 / 218.11 & 40 / 161.57 & 41 / 173.67 & 42 / 186.61 & 43 / 172.50 \\
\hline
\end{tabular} 
\caption{Square domain. Performance of FD (upper table) and of ADI (lower table)}
\label{tab:square}
\end{center}
\end{table}

%\begin{table}
%\begin{center}
%\footnotesize
%\begin{tabular}{|r|c|c|c|c|c|c|}
%\hline
%& \multicolumn{6}{|c|}{ FD Direct Solver Time (sec)} \\
%\hline
%$h^{-1}$ & $p=1$ & $p=2$ & $p=3$ & $p=4$ & $p=5$ & $p=6$ \\
%\hline
%512 & \z\z0.10 & \z\z0.08 & \z\z0.08 & \z\z0.08 & \z\z0.08 & \z\z0.09 \\
%\hline
%1024 & \z\z0.57 & \z\z0.52 & \z\z0.44 & \z\z0.40 & \z\z0.40 & \z\z0.44 \\
%\hline
%2048 & \z\z3.23 & \z\z3.16 & \z\z4.30 & \z\z3.03 & \z\z2.62 & \z\z2.80 \\
%\hline
%4096 & \z18.62 & \z18.39 & \z31.18 & \z26.59 & \z17.78 & \z21.11 \\
%\hline
%8192 & 120.22 & 126.61 & 293.68 & 254.39 & 124.34 & 201.72 \\
%\hline
%\end{tabular}  

%\vspace{0.5cm}

%\begin{tabular}{|r|c|c|c|c|c|c|}
%\hline
%& \multicolumn{6}{|c|}{ ADI Iterations / Time (sec)} \\
%\hline
%$h^{-1}$ & $p=1$ & $p=2$ & $p=3$ & $p=4$ & $p=5$ & $p=6$ \\
%\hline
%512 & 29 / \z0.20 & 28 / \z0.20 & 29 / \z0.23 & 30 / \z0.25 & 31 / \z\z0.29 & 32 / \z\z0.30 \\
%\hline
%1024 & 31 / \z0.63 & 31 / \z0.71 & 32 / \z0.93 & 33 / \z1.06 & 34 / \z\z1.15 & 35 / \z\z1.25 \\
%\hline
%2048 & 34 / \z3.57 & 34 / \z4.35 & 35 / \z5.63 & 36 / \z6.19 & 37 / \z\z7.17 & 37 / \z\z7.96 \\
%\hline
%4096 & 37 / 20.26 & 37 / 22.06 & 37 / 26.37 & 38 / 27.71 & 39 / \z29.06 & 40 / \z28.98 \\
%\hline
%8192 & 40 / 68.40 & 39 / 95.30 & 40 / 88.72 & 41 / 94.27 & 42 / 104.97 & 43 / 108.82 \\
%\hline
%\end{tabular} 
%\caption{Square domain. Performance of FD (upper table) and of ADI (lower table)}
%\label{tab:square}
%\end{center}
%\end{table}

The number of iterations of ADI is determined a priori, according to
\eqref{eq:ADIits}, and no a posteriori stopping criterion is
considered.  Table \ref{tab:relerr} 
reports  the relative error $\left\|e_J\right\|_M /
\left\|e_0\right\|_M$ for all cases considered in Table
\ref{tab:square}: observe that in all cases, this value is below the
prescribed tolerance $10^{-8}$ and at the same time,  never smaller than $3 \cdot 10^{-9}$, showing that \eqref{eq:ADIits} is indeed a good choice. 

\begin{table}
\footnotesize
\begin{center}
\begin{tabular}{|r|c|c|c|c|c|c|}
\hline
%& \multicolumn{6}{|c|}{ ADI relative error $\left\|e_J\right\|_M / \left\|e_0\right\|_M$ } \\
%\hline
$h^{-1}$ & $p=1$ & $p=2$ & $p=3$ & $p=4$ & $p=5$ & $p=6$ \\
\hline
512 & $3.0 \cdot 10^{-9}$ & $7.0 \cdot 10^{-9}$ & $6.8 \cdot 10^{-9}$ & $6.4 \cdot 10^{-9}$ & $6.7 \cdot 10^{-9}$ & $6.7 \cdot 10^{-9}$ \\
\hline
1024 & $7.7 \cdot 10^{-9}$ & $7.2 \cdot 10^{-9}$ & $5.4 \cdot 10^{-9}$ & $6.2 \cdot 10^{-9}$ & $6.3 \cdot 10^{-9}$ & $5.8 \cdot 10^{-9}$ \\
\hline
2048 & $7.5 \cdot 10^{-9}$ & $6.0 \cdot 10^{-9}$ & $4.9 \cdot 10^{-9}$ & $6.0 \cdot 10^{-9}$ & $5.1 \cdot 10^{-9}$ & $9.7 \cdot 10^{-9}$ \\
\hline
4096 & $6.9 \cdot 10^{-9}$ & $4.9 \cdot 10^{-9}$ & $7.6 \cdot 10^{-9}$ & $9.2 \cdot 10^{-9}$ & $7.8 \cdot 10^{-9}$ & $8.4 \cdot 10^{-9}$ \\
\hline
8192 & $6.1 \cdot 10^{-9}$ & $8.2 \cdot 10^{-9}$ & $7.5 \cdot 10^{-9}$ & $9.3 \cdot 10^{-9}$ & $7.9 \cdot 10^{-9}$ & $8.2 \cdot 10^{-9}$ \\
\hline
\end{tabular} 
\caption{ADI relative error $\left\|e_J\right\|_M / \left\|e_0\right\|_M$ at the final iteration for problems in Table \ref{tab:square}}
\label{tab:relerr}
\end{center}
\end{table}

We now turn to the first two problems with nontrivial geometry, namely the quarter of ring and the stretched square, and employ FD and ADI as preconditioners for CG (represented respectively by matrices $\p$ and $\p_J$). For both problems, we set $\epsilon = 10^{-1}$ as tolerance for the ADI preconditioner. 
%
%We did not employ the adaptive choice of the tolerance that was hinted in , as this would not significantly improve the performance of the method.
%
We remark that a slightly better performance could be obtained by an adaptive choice of $\epsilon$, as described in \cite[Chapter 3]{Wachspress2013}. However, we did not implement this strategy.
%{\color{blue} (oppure lo faccio? Nel caso stretched square, ho visto che con ADItol = 0.9 risparmio circa 1/3 del tempo...)\GS lo metterei in un remark\B} 	{\color{blue} parliamone a voce}

To better judge the efficiency of the Sylvester-based approaches, we compare the results with those obtained when using a preconditioner based on the Incomplete Cholesky (IC) factorization (implemented by the {\sc Matlab} function {\tt ichol}). 
To improve the performance of this approach, we considered some preliminary reorderings of $\A$, namely those implemented by the {\sc Matlab} functions {\tt symrcm}, {\tt symamd} and {\tt colperm}. The reported results refer to the {\tt symrcm} reordering, which yields the best performance. 
%In this approach, a preliminary reordering of $\mathcal{A}$  was performed using the reverse Cuthill–McKee algorithm ({\sc Matlab} function {\tt symrcm}) to improve the performance of the preconditioner. 
We remark that incomplete factorizations have
been considered as preconditioners for IGA problems in
\cite{Collier2013}, where the authors observed that this approach is
quite robust w.r.t. $p$.

In Tables \ref{tab:quarterofring} and \ref{tab:stretchedsquare}, we report the total computation time and the number of CG iterations for both problems; when ADI is used, we also report the number of iterations performed at each application of the preconditioner. Here and throughout, the computation time includes the time needed to setup the preconditioner. Note that the considered values of the mesh size are larger than in the square case. Indeed, while in the square case we need to store only the blocks $\M1$, $\M2$, $\K1$ and $\K2$, in the presence of nontrivial geometry the whole matrix $\A$ has to be stored, and this is unfeasible for our computer resources when $h$ is too small. Below we report some comments on the numerical results for the two
geometries: the quarter of ring and the stretched square.

\begin{table}
\begin{center}
\footnotesize
\begin{tabular}{|r|c|c|c|c|c|}
\hline
& \multicolumn{4}{|c|}{ CG + $\mathcal{P}$ Iterations / Time (sec)} \\
\hline
$h^{-1}$ & $p=2$ & $p=3$ & $p=4$ & $p=5$  \\
\hline
128 & 25 / 0.04 & 25 / 0.06 & 25 / 0.07 & 25 / \z0.09 \\
\hline
256 & 25 / 0.20 & 25 / 0.26 & 25 / 0.34 & 25 / \z0.40 \\
\hline
512 & 26 / 1.13 & 26 / 1.36 & 26 / 1.62 & 26 / \z2.00 \\
\hline
1024 & 26 / 7.30 & 26 / 8.13 & 26 / 9.09 & 26 / 10.52 \\
\hline
\end{tabular}  

\vspace{0.5cm}

\begin{tabular}{|r|c|c|c|c|c|}
\hline
& \multicolumn{4}{|c|}{ CG + $\mathcal{P}_J$ Iterations / Time (sec)} \\
\hline
$h^{-1}$ & $p=2$ & $p=3$ & $p=4$ & $p=5$  \\
\hline
128 & 25 (5) / 0.12 & 25 (5) / \z0.14 & 25 (5) / \z0.17 & 25 (5) / \z0.19 \\
\hline
256 & 26 (5) / 0.49 & 26 (5) / \z0.54 & 26 (6) / \z0.76 & 25 (6) / \z0.82 \\
\hline
512 & 27 (6) / 2.39 & 27 (6) / \z2.68 & 26 (6) / \z2.94 & 26 (6) / \z3.38 \\
\hline
1024 & 27 (6) / 9.94 & 27 (6) / 11.00 & 27 (7) / 14.15 & 27 (7) / 16.15 \\
\hline
\end{tabular} 

\vspace{0.5cm}

\begin{tabular}{|r|c|c|c|c|}
\hline
& \multicolumn{4}{|c|}{ CG + IC Iterations / Time (sec)} \\
\hline
$h^{-1}$ & $p=2$ & $p=3$ & $p=4$ & $p=5$  \\
\hline
128 & 65 / \z0.12 & 49 / \z0.18 & 40 / \z0.21 & 33 / \z\z0.28 \\
\hline
256 & 130 / \z0.92 & 98 / \z1.16 & 80 / \z1.51 & 65 / \z\z1.87 \\
\hline
512 & 264 / \z7.94 & 198 / \z9.47 & 160 / 11.42 & 128 / \z13.29 \\
\hline
1024 & 533 / 64.54 & 399 / 75.22 & 324 / 89.29 & 262 / 103.26 \\
\hline
\end{tabular} 
\caption{Quarter of ring domain. Performance of CG preconditioned by FD (upper table), by ADI (middle table) and by Incomplete Cholesky (lower table).} 
\label{tab:quarterofring}
\end{center}
\end{table}

\begin{table}
\begin{center}
\footnotesize
\begin{tabular}{|r|c|c|c|c|c|}
\hline
& \multicolumn{5}{|c|}{ CG + $\mathcal{P}$ Iterations / Time (sec)} \\
\hline
$h^{-1}$ & $p=1$ & $p=2$ & $p=3$ & $p=4$ & $p=5$  \\
\hline
128 & 58 / \z0.08 & 61 / \z0.08 & 62 / \z0.12 & 61 / \z0.16 & 61 / \z0.21 \\
\hline
256 & 64 / \z0.39 & 66 / \z0.48 & 66 / \z0.61 & 66 / \z0.82 & 66 / \z0.98 \\
\hline
512 & 69 / \z2.58 & 70 / \z2.83 & 69 / \z3.33 & 69 / \z4.01 & 69 / \z4.88 \\
\hline
1024 & 73 / 18.45 & 73 / 18.56 & 72 / 20.88 & 72 / 23.46 & 71 / 26.94 \\
\hline
\end{tabular}  

\vspace{0.5cm}

\begin{tabular}{|r|c|c|c|c|c|}
\hline
& \multicolumn{5}{|c|}{ CG + $\mathcal{P}_J$ Iterations / Time (sec)} \\
\hline
$h^{-1}$ & $p=1$ & $p=2$ & $p=3$ & $p=4$ & $p=5$  \\
\hline
128 & 58 (5) / \z0.25 & 61 (5) / \z0.28 & 61 (5) / \z0.33 & 61 (5) / \z0.40 & 61 (5) / \z0.48 \\
\hline
256 & 65 (5) / \z0.94 & 65 (5) / \z1.23 & 65 (5) / \z1.35 & 65 (6) / \z1.87 & 66 (6) / \z2.14 \\
\hline
512 & 69 (6) / \z5.76 & 70 (6) / \z6.27 & 69 (6) / \z6.85 & 69 (6) / \z7.71 & 69 (6) / \z8.85 \\
\hline
1024 & 74 (6) / 28.53 & 73 (6) / 27.17 & 73 (6) / 29.25 & 72 (7) / 37.47 & 72 (7) / 42.71 \\
\hline
\end{tabular}  

\vspace{0.5cm}

\begin{tabular}{|r|c|c|c|c|c|}
\hline
& \multicolumn{5}{|c|}{ CG + IC Iterations / Time (sec)} \\
\hline
$h^{-1}$ & $p=1$ & $p=2$ & $p=3$ & $p=4$ & $p=5$  \\
\hline
128 & 50 / \z0.07 & 38 / \z0.07 &  26 / \z0.11 & 22 / \z0.14 & 20 / \z0.21 \\
\hline
256 & 102 / \z0.46 & 88 / \z0.64 & 53 / \z0.68 & 44 / \z0.94 & 38 / \z1.24 \\
\hline
512 & 213 / \z3.91 & 232 / \z7.17 & 115 / \z5.73 & 89 / \z6.99 & 76 / \z8.45 \\
\hline
1024 & 439 / 33.68 & 780 / 93.07 & 248 / 47.56 & 181 / 50.80 & 153 / 63.52 \\
\hline
\end{tabular}  
\caption{Stretched square domain. Performance of CG preconditioned by FD (upper table), by ADI (middle table) and by Incomplete Cholesky (lower table).}
\label{tab:stretchedsquare}
\end{center}
\end{table}

\begin{itemize}
\item For both the ADI and the FD preconditioners, the number of CG iterations is practically independent on $p$ and slightly increases as the mesh is refined, but stays uniformly bounded according to Proposition \ref{prop:bound1}. Moreover, in both approaches the computation times depend on $p$ only mildly.
\item The inexact application of $\p$ via the ADI method does not significantly affect the number of CG iterations. Moreover, the number of inner ADI iterations is roughly the same in all considered cases.
\item The overall performance obtained with FD is slightly better than with ADI for all the considered discretization levels; if finer meshes are considered, ADI should eventually outperform FD.
\item Interestingly, in the IC approach the number of CG iterations decreases for higher $p$. On the other hand, the CPU time still increases due to the greater computational cost of forming and applying the preconditioner. 
\item For small enough $h$, both the ADI and the FD preconditioners yield better performance, in terms of CPU time, than the IC preconditioner.
\end{itemize}

Finally, in Table \ref{tab:platewithhole} we report the results for the plate with hole domain. 
As expected, the performance of the Sylvester-based preconditioners in this case is much worse than in the
previous cases, and in particular they are not robust neither
w.r.t. $h$ nor w.r.t. $p$.  
One can introduce modifications of $\p$  that
significantly improve the conditioning of the preconditioned system,
however we postpone this investigation to a further work.
Interestingly, however, if we compare the results with the ones obtained with the IC preconditioner, we see that computation times are still 
comparable with those relative to the FD preconditioner for all discretization levels.
In conclusion, even in most penalizing case among those considered, the proposed preconditioner is competitive with a standard one.

\begin{table}
\begin{center}
\footnotesize
\begin{tabular}{|r|c|c|c|c|}
\hline
& \multicolumn{4}{|c|}{ CG + $\mathcal{P}$ Iterations / Time (sec)} \\
\hline
$h^{-1}$ & $p=2$ & $p=3$ & $p=4$ & $p=5$  \\
\hline
128 & 125 / \z0.18 & 155 / \z\z0.27 & 186 / \z\z0.48 & 216 / \z\z0.74 \\
\hline
256 & 189 / \z1.33 & 236 / \z\z2.10 & 280 / \z\z3.30 & 320 / \z\z4.56 \\
\hline
512 & 279 / 10.59 & 345 / \z15.83 & 406 / \z21.71 & 446 / \z29.45 \\
\hline
1024 & 404 / 99.15 & 487 / 140.64 & 556 / 174.55 & 587 / 203.66 \\
\hline
\end{tabular}  

\vspace{0.5cm}

\begin{tabular}{|r|c|c|c|c|}
\hline
& \multicolumn{4}{|c|}{ CG + $\mathcal{P}_J$ Iterations / Time (sec)} \\
\hline
$h^{-1}$ & $p=2$ & $p=3$ & $p=4$ & $p=5$  \\
\hline
128 & 200 (5) / \z\z0.95 & 260 (5) / \z\z1.40 & 324 (5) / \z\z2.16 & 380 (6) / \z\z3.49 \\
\hline
256 & 346 (6) / \z\z7.34 & 429 (6) / \z10.25 & 522 (6) / \z14.91 & 620 (6) / \z20.66 \\
\hline
512 & 582 (6) / \z49.93 & 714 (6) / \z68.24 & 846 (6) / \z92.16 & 870 (7) / 127.39   \\
\hline
1024 & 771 (7) / 319.56 & 962 (7) / 438.11 & 1184 (7) / 605.72 & 1366 (7) / 834.23  \\
\hline
\end{tabular}  

\vspace{0.5cm}

\begin{tabular}{|r|c|c|c|c|}
\hline
& \multicolumn{4}{|c|}{ CG + IC Iterations / Time (sec)} \\
\hline
$h^{-1}$ & $p=2$ & $p=3$ & $p=4$ & $p=5$  \\
\hline
128 & \z92 / \z0.15 & \z55 / \z\z0.17 & \z45 / \z\z0.24 & \z38 / \z\z0.33 \\
\hline
256 & 180 / \z1.27 & 114 / \z\z1.37 & \z90 / \z\z1.70 & \z73 / \z\z2.08 \\
\hline
512 & 354 / 10.62 & 253 / \z12.13 & 193 / \z13.64 & 151 / \z15.40 \\
\hline
1024 & 695 / 86.53 & 597 / 111.67 & 444 / 121.18 & 339 / 132.59 \\
\hline
\end{tabular}  
\caption{Plate with hole domain. Performance of CG preconditioned by FD (upper table), by ADI (middle table) and by Incomplete Cholesky (lower table).}
\label{tab:platewithhole}
\end{center}
\end{table}

\subsection{3D experiments}

As in the 2D case, we first consider a domain with trivial geometry, namely the unit cube $[0, 1]^3$, and then turn to more complicated domains, which are shown in Figure \ref{fig:3Dfigures}. 
The first one is a thick quarter of ring; note that this solid has a trivial geometry on the third direction.
The second one is the solid of revolution obtained by the 2D quarter of ring. Specifically, we performed a $\pi/2$ revolution around the axis having direction $(0, 1, 0)$ and passing through $(-1, -1, -1)$. We emphasize that here the geometry is nontrivial along all directions. 
In the cube case, we set $b = ${\tt randn}$(n^3,1)$ for computational ease, while in the other two cases $b$ is the vector representing the function $f(x,y,z) = 2\left(x^2 - x\right) + 2\left(y^2 - y\right) + 2\left(z^2 - z\right) $.
We again set $K$ as the identity matrix in all cases.

\begin{figure}
\begin{center}
  \begin{minipage}[b]{0.49\textwidth}
    \includegraphics[trim=0cm 0cm 0cm 0cm, clip=true, width=\textwidth]{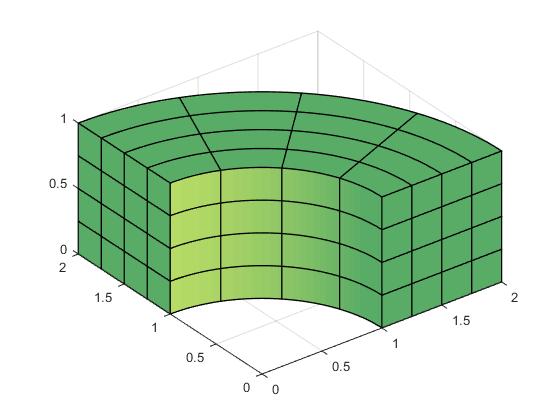}
  \end{minipage}
  \begin{minipage}[b]{0.49\textwidth}
    \includegraphics[trim=0cm 0cm 0cm 0cm, clip=true, width=\textwidth]{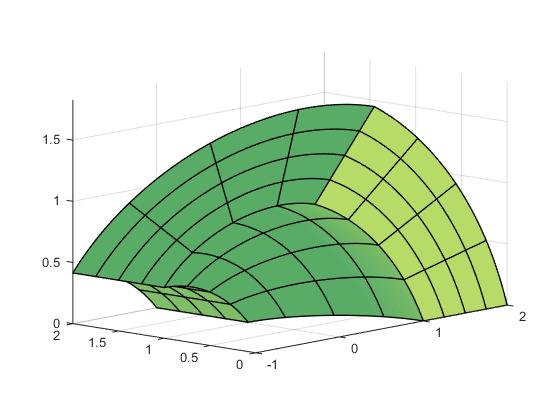}
  \end{minipage}
\caption{Thick ring and revolved ring domains}
\label{fig:3Dfigures}
\end{center}
\end{figure}

We report in Table \ref{tab:cube} the performances of FD and ADI (where the parameters are chosen as in \cite{Douglas1962}) relative to the cube domain. 
We can see that the computational time required by FD is independent of the degree $p$. 
%In particular, unlike the 2D case, here we do not observe time fluctuations due to the eigedecompositions step (as the cost of this step is now negligible). 
In fact, the timings look impressive and show the great efficiency of this approach. 
We emphasize that, on the finest discretization level, problems with more than one billion variables are solved in slightly more than five minutes, regardless of $p$. 
On the other hand, the ADI solver shows a considerably worse performance. Indeed, while the results confirm that this approach is robust w.r.t. $h$ and $p$, the timings are always a couple of orders of magnitude greater than those obtained with FD. 
%Note also that, as expected, the number of iterations is higher than in the 2D case, see Table \ref{tab:square}.
A comparison with Table \ref{tab:square} shows also that, as expected, the number of ADI iterations is higher than in the 2D case. 

We mention that, for ADI, we also tested with the greedy choice of the parameters defined by \eqref{eq:greedy1}-\eqref{eq:greedy2}.
This choice yields about 10\%-20\% less iterations than the standard approach. Despite being an effective strategy, the improvement is not dramatic and the FD method is still much more efficient. In the following tests, we always consider the parameters from \cite{Douglas1962}.

\begin{table}
\begin{center}
\footnotesize
\begin{tabular}{|r|c|c|c|c|c|c|}
\hline
& \multicolumn{6}{|c|}{ FD Direct Solver Time (sec) } \\
\hline
$h^{-1}$ & $p=1$ & $p=2$ & $p=3$ & $p=4$ & $p=5$ & $p=6$ \\
\hline
128  & \z\z0.19 & \z\z0.15 & \z\z0.17 & \z\z0.17 & \z\z0.18 & \z\z0.17 \\
\hline
256  & \z\z1.80 & \z\z1.87 & \z\z2.05 & \z\z1.90 & \z\z2.10 & \z\z1.82 \\
\hline
512 & \z23.02 & \z22.58 & \z23.45 & \z23.26 & \z23.65 & \z21.89 \\
\hline
1024 & 331.01 & 316.15 & 328.65 & 318.42 & 331.06 & 310.31 \\
\hline
\end{tabular}  

\vspace{0.5cm}

\begin{tabular}{|r|c|c|c|c|c|c|}
\hline
& \multicolumn{6}{|c|}{ 3D ADI Iterations / Time (sec) } \\
\hline
$h^{-1}$ & $p=1$ & $p=2$ & $p=3$ & $p=4$ & $p=5$ & $p=6$ \\
\hline
128 & 57/\z\z16.91 & 57/\z\z22.50 & 57/\z\z18.31 & 65/\z\z22.12 & 65/\z\z24.30 & 65/\z\z22.35 \\
\hline
256 & 65/\z177.87 & 65/\z256.44 & 65/\z180.94 & 65/\z189.45 & 73/\z239.31 & 73/\z199.29 \\
\hline
512 & 73/1872.01 & 73/2350.32 & 73/2239.23 & 73/2354.14 & 81/2714.25 & 81/1708.36 \\
\hline
\end{tabular}  
\caption{Cube domain. Performance of FD (upper table) and of ADI (lower table). 
We did not run ADI on the finest level due to memory limitations. 
}
\label{tab:cube}
\end{center}
\end{table}

%\begin{table}
%\begin{center}
%\footnotesize
%\begin{tabular}{|r|c|c|c|c|c|c|}
%\hline
%& \multicolumn{6}{|c|}{ 3D Direct Solver Time (seconds) } \\
%\hline
%$h^{-1}$ & $p=1$ & $p=2$ & $p=3$ & $p=4$ & $p=5$ & $p=6$ \\
%\hline
%128  & \z0.07 & \z0.06 & \z0.06 & \z0.07 & \z0.08 & \z0.08 \\
%\hline
%256  & \z0.53 & \z0.61 & \z0.64 & \z0.65 & \z0.65 & \z0.56 \\
%\hline
%512 & \z5.58 & \z8.01 & \z6.80 & \z5.83 & \z8.00 & \z5.76 \\
%\hline
%1024 & 66.79 & 68.46 & 67.30 & 66.38 & 66.60 & 63.86 \\
%\hline
%\end{tabular}  

%\vspace{0.5cm}

%\begin{tabular}{|r|c|c|c|c|c|c|}
%\hline
%& \multicolumn{6}{|c|}{ 3D ADI Iterations / Time (seconds) } \\
%\hline
%$h^{-1}$ & $p=1$ & $p=2$ & $p=3$ & $p=4$ & $p=5$ & $p=6$ \\
%\hline
%128  & 57 / \z\z8.21 & 57 / \z\z8.62 & 57 / \z10.83 & 65 / \z13.02 & 65 / \z\z14.17 & 65 / \z14.54 \\
%\hline
%256  & 65 / \z76.44 & 65 / \z95.38 & 65 / \z95.64 & 65 / 101.56 & 73 / \z123.26 & 73 / 123.40 \\
%\hline
%512  & 73 / 688.83 & 73 / 840.33 & 73 / 851.95 & 73 / 933.05 & 81 / 1106.50 & 81 / 890.45 \\
%\hline
%\end{tabular}  
%\caption{Cube domain. Performance of FD (upper table) and of ADI (lower table). 
%We did not run ADI on the finest level due to memory limitations. 
%}
%\label{tab:cube}
%\end{center}
%\end{table}

We now consider the problems with nontrivial geometries, where the two methods are used as preconditioners for CG. In the case of ADI, we again set $\epsilon = 10^{-1}$ for both problems. As in the 2D case, we also consider a standard Incomplete Cholesky (IC) preconditioner (no reordering is used in this case, as the resulting performance is better than when using the standard reorderings available in {\sc Matlab}).

In Table \ref{tab:thick_ring} we report the results for the thick quarter ring while in Table \ref{tab:rev_ring} we report the results for the revolved ring. The symbol ``*'' denotes the cases in which even assembling the system matrix $\A$ was unfeasible due to memory limitations. 
From these results, we infer that most of the conclusions drawn for the 2D case still hold in 3D.  
In particular, both Sylvester-based preconditioners yield a better performance than the IC preconditioner, especially for small $h$. 

\begin{table}
\begin{center}
\footnotesize
\begin{tabular}{|r|c|c|c|c|c|}
\hline
& \multicolumn{5}{|c|}{ CG + $\mathcal{P}$ Iterations / Time (sec)} \\
\hline
$h^{-1}$ & $p=2$ & $p=3$ & $p=4$ & $p=5$ & $p=6$ \\
\hline
32 & 26 / \z0.19 & 26 / \z0.38 & 26 / \z0.75 & 26 / \z1.51 & 26 / \z2.64 \\
\hline
64 & 27 / \z1.43 & 27 / \z3.35 & 27 / \z6.59 & 27 / 12.75 & 27 / 21.83 \\
\hline
128 & 28 / 14.14 & 28 / 32.01 & 28 / 61.22 & * & * \\
\hline
\end{tabular}  

\vspace{0.5cm}

\begin{tabular}{|r|c|c|c|c|c|}
\hline
& \multicolumn{5}{|c|}{ CG + $\mathcal{P}_J$ Iterations / Time (sec)} \\
\hline
$h^{-1}$ & $p=2$ & $p=3$ & $p=4$ & $p=5$ & $p=6$ \\
\hline
32 & 26 (7) / \z0.88 & 26 (7) / \z1.20 & 26 (7) / \z\z1.71 & 26 (7) / \z2.62 & 27 (8) / \z4.08 \\
\hline
64 & 27 (7) / \z7.20 & 27 (8) / 10.98 & 27 (8) / \z14.89 & 27 (8) / 21.81 & 27 (8) / 30.56 \\
\hline
128 & 28 (8) / 99.01 & 28 (8) / 98.39 & 28 (8) / 143.45 & * & * \\
\hline
\end{tabular}  

\vspace{0.5cm}

\begin{tabular}{|r|c|c|c|c|c|}
\hline
& \multicolumn{5}{|c|}{ CG + IC Iterations / Time (sec)} \\
\hline
$h^{-1}$ & $p=2$ & $p=3$ & $p=4$ & $p=5$ & $p=6$ \\
\hline
32 & 21 / \z\z0.37 & 15 / \z\z1.17 & 12 / \z\z3.41 & 10 / \z9.43 & \z9 / \z24.05 \\
\hline
64 & 37 / \z4.26 & 28 / \z13.23 & 22 / \z33.96 & 18 / 88.94 & 16 / 215.31 \\
\hline
128 & 73 / 65.03 & 51 / 163.48 & 41 / 385.54 & * & * \\
\hline
\end{tabular}  
\caption{Thick quarter of ring domain. Performance of CG preconditioned by the direct method (upper table), by ADI (middle table) and by Incomplete Cholesky (lower table).}
\label{tab:thick_ring}
\end{center}
\end{table}

\begin{table}
\begin{center}
\footnotesize
\begin{tabular}{|r|c|c|c|c|c|}
\hline
& \multicolumn{5}{|c|}{ CG + $\mathcal{P}$ Iterations / Time (sec)} \\
\hline
$h^{-1}$ & $p=2$ & $p=3$ & $p=4$ & $p=5$ & $p=6$ \\
\hline
32 & 40 / \z0.27 & 41 / \z0.63 & 41 / \z\z1.24 & 42 / \z2.38 & 42 / \z4.13 \\
\hline
64 & 44 / \z2.30 & 44 / \z5.09 & 45 / \z10.75 & 45 / 20.69 & 45 / 35.11 \\
\hline
128 & 47 / 23.26 & 47 / 55.34 & 47 / 101.94 & * & * \\
\hline
\end{tabular}  

\vspace{0.5cm}

\begin{tabular}{|r|c|c|c|c|c|}
\hline
& \multicolumn{5}{|c|}{ CG + $\mathcal{P}_J$ Iterations / Time (sec)} \\
\hline
$h^{-1}$ & $p=2$ & $p=3$ & $p=4$ & $p=5$ & $p=6$ \\
\hline
32 & 40 (7) / \z\z1.39 & 41 (7) / \z\z1.93 & 41 (7) / \z\z2.67 & 42 (7) / \z4.17 & 42 (8) / \z6.25 \\
\hline
64 & 44 (7) / \z11.82 & 44 (8) / \z16.96 & 45 (8) / \z24.31 & 45 (8) / 35.76 & 45 (8) / 49.89 \\
\hline
128 & 47 (8) / 170.69 & 47 (8) / 168.45 & 47 (9) / 239.07 & * & * \\
\hline
\end{tabular}  

\vspace{0.5cm}

\begin{tabular}{|r|c|c|c|c|c|}
\hline
& \multicolumn{5}{|c|}{ CG + IC Iterations / Time (sec)} \\
\hline
$h^{-1}$ & $p=2$ & $p=3$ & $p=4$ & $p=5$ & $p=6$ \\
\hline
32 & 24 / \z0.44 & 18 / \z\z1.28 & 15 / \z\z3.61 & 12 / \z9.63 & 11 / \z24.57 \\
\hline
64 & 47 / \z5.19 & 35 / \z14.95 & 28 / \z37.33 & 24 / 94.08 & 20 / 222.09 \\
\hline
128 & 94 / 81.65 & 71 / 211.53 & 57 / 464.84 & * & * \\
\hline
\end{tabular}  
\caption{Revolved quarter of ring domain. Performance of CG preconditioned by the direct method (upper table), by ADI (middle table) and by Incomplete Cholesky (lower table).}
\label{tab:rev_ring}
\end{center}
\end{table}

Somewhat surprisingly, however, the CPU times show a stronger dependence on $p$ than in the 2D case, and the performance gap between the ADI and the FD approach is not as large as for the cube domain.
This is due to the cost of the residual computation  in the
CG iteration (a sparse matrix-vector product, costing $O(p^3 n^3)$
FLOPs). This step represents now a significant computational effort in the overall CG performance.
In fact, our numerical experience shows that the 3D FD method is
so efficient that the time spent in the preconditioning step is often negligible w.r.t. the time required for
the residual computation.
This effect is clearly shown in Table \ref{tab:percentage}, where we we report the percentage of time spent in the application of the preconditioner when compared with the overall time of CG, in the case of the revolved ring domain.
Interestingly, this percentage is almost constant w.r.t. $h$ up to the finest discretization level, corresponding to about 2 million degrees-of-freedom.

\begin{table}
\begin{center}
\footnotesize
\begin{tabular}{|r|c|c|c|c|c|}
\hline
$h^{-1}$ & $p=2$ & $p=3$ & $p=4$ & $p=5$ & $p=6$ \\
\hline
32 & 25.60 & 13.34 & 7.40 & 4.16 & 2.44 \\
\hline
64 & 22.69 & 11.26 & 5.84 & 3.32 & 1.88 \\
\hline
128 & 25.64 & 13.09 & 6.92 & * & * \\
\hline
\end{tabular}  
\caption{Percentage of time spent in the application of the 3D FD preconditioner with respect to the overall CG time. Revolved ring domain.}
\label{tab:percentage}
\end{center}
\end{table}

%\begin{table}
%\begin{center}
%\footnotesize
%\begin{tabular}{|r|c|c|c|c|c|}
%\hline
%$h^{-1}$ & $p=2$ & $p=3$ & $p=4$ & $p=5$ & $p=6$ \\
%\hline
%32 & 24.43 & 12.40 & 6.52 & 4.15 & 2.66 \\
%\hline
%64 & 19.60 & 9.10 & 3.78 & 2.21 & 1.24 \\
%\hline
%128 & 15.58 & 7.00 & 3.73 & * & * \\
%\hline
%\end{tabular}  
%\caption{Percentage of time spent in the application of the 3D FD preconditioner with respect to the overall CG time. Revolved ring domain.}
%\label{tab:percentage}
%\end{center}
%\end{table}

\subsection{Multi-patch experiments}

\begin{figure}
\begin{center}
\includegraphics[trim=0cm 0cm 0cm 0cm, clip=true, width=\textwidth]{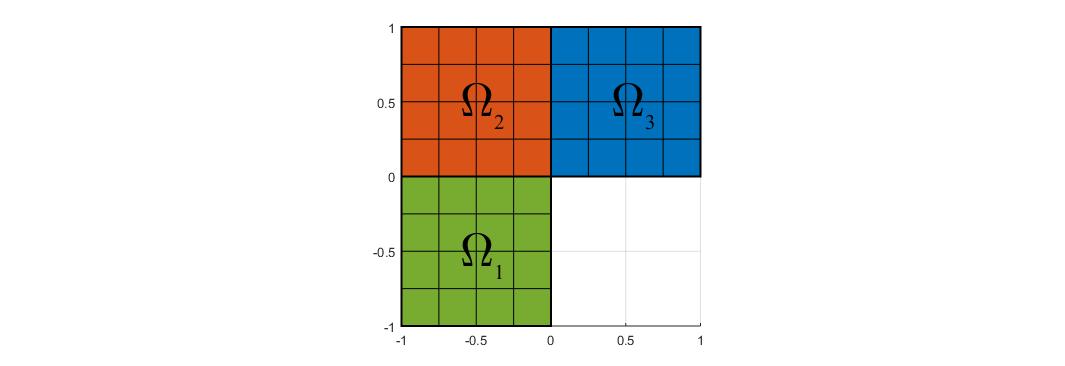}
\caption{L-shaped domain domain (3 patches)}
\label{fig:L}
\end{center}
\end{figure}

In this section we consider a multi-patch 2D problem. We consider the
L-shaped domain shown in Figure \ref{fig:L}, discretized with 3
patches and imposing $C^{0}$ continuity at the interfaces. We solve
this problem using the additive overlapping Schwarz preconditioner
described in Section \ref{sec:multi-patch}. Here $\Omega$ is split into two rectangular subdomains, which overlap on $\Omega_2$. Despite its apparent simplicity, this problem contains all the relevant ingredients to test the validity of our approach in the multi-patch case.
Note in particular that the Jacobian of the geometry mapping on the subdomains is not the identity matrix, due to stretching in either direction. This stretching could be easily incorporated into the preconditioner, but we avoid doing that in order to mimic the effect of nontrivial geometry.

We consider both the exact preconditioner, where the local systems are solved with {\sc Matlab}'s direct solver, and the inexact one \eqref{eq:Pias}, with $\widetilde{\A}_i = \p_i$, implemented by the FD solver. For completeness, we also consider the IC preconditioner, like in previous experiments. The results are shown in Table \ref{tab:L}. 
At the coarsest level and for low degree, the three approaches perform similarly, in terms of CPU time. However, as expected the $\p_{IAS}$ preconditioner scales much better than the others w.r.t. $h$ and $p$. In particular, the number of iterations is independent of these parameters. 

We also tested with $\p_{IAS}$, when however the matrices $\widetilde{\A}^{-1}_i$ represent a fixed number of preconditioned CG iterations. The results are comparable with the ones obtained for $\widetilde{\A}_i = \p_i$, although slightly worse, and hence they are not shown.

\begin{table}
\begin{center}
\footnotesize
\begin{tabular}{|r|c|c|c|c|c|}
\hline
& \multicolumn{5}{|c|}{ CG + $\p_{EAS}$ Iterations / Time (sec)} \\
\hline
$h^{-1}$ & $p=1$ & $p=2$ & $p=3$ & $p=4$ & $p=5$  \\
\hline
128 & 3 / \z\z0.88 & 2 / \z\z1.17 & 2 / \z\z2.51 & 2 / \z\z3.82 & 2 / \z\z\z6.14 \\
\hline
256 & 2 / \z\z2.98 & 2 / \z\z6.43 & 2 / \z16.44 & 2 / \z24.98 & 2 / \z\z41.27 \\
\hline
512 & 2 / \z17.42 & 2 / \z34.83 & 2 / 128.71 & 2 / 168.33 & 2 / \z334.95 \\
\hline
1024 & 2 / 110.55 & 2 / 266.03 & 2 / 998.72 & 2 / 2028.72 & 2 / 2530.75 \\
\hline
\end{tabular}  

\vspace{0.5cm}

\begin{tabular}{|r|c|c|c|c|c|}
\hline
& \multicolumn{5}{|c|}{ CG + $\p_{IAS}$ Iterations / Time (sec)} \\
\hline
$h^{-1}$ & $p=1$ & $p=2$ & $p=3$ & $p=4$ & $p=5$  \\
\hline
128 & 20 / \z0.79 & 20 / \z0.54 & 20 / \z0.64 & 19 / \z0.74 & 19 / \z0.86 \\
\hline
256 & 19 / \z1.46 & 20 / \z1.69 & 19 / \z1.91 & 19 / \z2.20 & 19 / \z2.55 \\
\hline
512 & 19 / \z7.32 & 19 / \z7.50 & 19 / \z8.11 & 19 / \z9.07 & 19 / 10.23 \\
\hline
1024 & 19 / 44.40 & 19 / 44.57 & 19 / 49.50 & 18 / 61.42 & 18 / 55.84 \\
\hline
\end{tabular}  

\vspace{0.5cm}

\begin{tabular}{|r|c|c|c|c|c|}
\hline
& \multicolumn{5}{|c|}{ CG + IC Iterations / Time (sec)} \\
\hline
$h^{-1}$ & $p=1$ & $p=2$ & $p=3$ & $p=4$ & $p=5$  \\
\hline
128 & \z144 / \z\z0.58 & \z94 / \z\z0.57 & \z69 / \z\z0.58  & \z56 / \z\z0.71 & \z46 / \z\z0.89 \\
\hline
256 & \z280 / \z\z3.76 & 180 / \z\z4.07 & 132 / \z\z4.65 & 106 / \z\z5.46 & \z87 / \z\z6.23 \\
\hline
512 & \z544 / \z31.07 & 348 / \z31.86 & 254 / \z36.10 & 203 / \z42.44 & 166 / \z48.50 \\
\hline
1024 & 1052 / 237.01 & 673 / 246.65 & 491 / 273.97 & 392 / 325.38 & 321 / 470.61 \\
\hline
\end{tabular} 
\caption{L-shaped domain.  Performance of CG preconditioned by $\p_{EAS}$ (upper table), by $\p_{IAS}$ (middle table) and by Incomplete Cholesky (lower table).}
\label{tab:L}
\end{center}
\end{table}

\section{Conclusions} \label{sec:conclusions}
In this work we have analyzed and tested the use of fast solvers for Sylvester-like equations as preconditioners for isogeometric discretizations. 

%We have considered here a Poisson problem on a single-patch domain.  We have focused on the $k$-method, i.e., splines with maximal smoothness. We have considered 2D and 3D problems and  we have come to interesting, somehow unexpected, conclusions. The Bartels-Stewart direct solver, especially in 3D,  is by far the most effective approach. In our study it has been  compared to the  ADI solver, a popular iterative approach for the Sylvester equation, and a simple-minded incomplete Cholesky preconditioner.

We considered here a Poisson problem on a single-patch domain, and we focused on the $k$-method, i.e., splines with maximal smoothness.
%
%The considered preconditioner $\P$ is robust w.r.t. $h$ and $p$, and its application using the Bartels-Stewart direct method is by far more efficient than with the ADI solver, especially in 3D. Both approaches easily outperform a simple-minded incomplete Cholesky preconditioner.
The considered preconditioner $\p$ is robust w.r.t. $h$ and $p$, and we have compared two popular methods for its application. We found that the FD direct solver, especially in 3D,  is by far more effective than the ADI iterative solver. Both approaches easily outperform a simple-minded Incomplete Cholesky preconditioner. 

Our conclusion is that the use of the FD method is,
likely, the best possible choice to compute the action of the operator
$\p^{-1}$.  
In fact, even if a more efficient solver was available, its employment would not necessary yield a relevant improvement in the overall performance, since in our  experiments the cost to apply the FD solver is already negligible w.r.t. the cost of a single matrix-vector product.
This is, then, a very promising  preconditioning stage in an iterative solver for isogeometric
discretizations. 
As we showed, this preconditioner can be easily combined with a domain decomposition strategy to solve multi-patch problems with conforming discretization.
In  a forthcoming paper we will further study the
role of the geometry parametrization on the performance of the
approaches based on Sylvester equation solvers, and propose possible strategies to improve it.

\section*{Acknowledgments}
The authors would like to thank  Valeria Simoncini  for fruitful discussions on the topics of the paper.
The authors were partially supported by the European Research Council
through the FP7 Ideas Consolodator Grant \emph{HIGEOM} n.616563. This support is
gratefully acknowledged.

\bibliography{biblio_sylvester_gian}

\begin{thebibliography}{10}

\bibitem{Ballani2013}
{\sc J.~Ballani and L.~Grasedyck}, {\em A projection method to solve linear
  systems in tensor format}, Numerical Linear Algebra with Applications, 20
  (2013), pp.~27--43.

\bibitem{Bartels1972}
{\sc R.~H. Bartels and G.~W. Stewart}, {\em Solution of the matrix equation
  {AX+ XB= C}}, Communications of the ACM, 15 (1972), pp.~820--826.

\bibitem{bazilevs2010isogeometric}
{\sc Y.~Bazilevs, C.~Michler, V.~M. Calo, and T.~J.~R. Hughes}, {\em
  Isogeometric variational multiscale modeling of wall-bounded turbulent flows
  with weakly enforced boundary conditions on unstretched meshes}, Computer
  Methods in Applied Mechanics and Engineering, 199 (2010), pp.~780--790.

\bibitem{acta-IGA}
{\sc L.~Beir{\~a}o~da Veiga, A.~Buffa, G.~Sangalli, and R.~V\'{a}zquez}, {\em
  Mathematical analysis of variational isogeometric methods}, Acta Numerica, 23
  (2014), pp.~157--287.

\bibitem{Veiga2012}
{\sc L.~Beir{\~a}o~da Veiga, D.~Cho, L.~F. Pavarino, and S.~Scacchi}, {\em
  Overlapping {S}chwarz methods for isogeometric analysis}, SIAM Journal on
  Numerical Analysis, 50 (2012), pp.~1394--1416.

\bibitem{BeiraodaVeiga2013}
\leavevmode\vrule height 2pt depth -1.6pt width 23pt, {\em {BDDC}
  preconditioners for isogeometric analysis}, Mathematical Models and Methods
  in Applied Sciences, 23 (2013), pp.~1099--1142.

\bibitem{Benner2013}
{\sc P.~Benner and T.~Breiten}, {\em Low rank methods for a class of
  generalized {L}yapunov equations and related issues}, Numerische Mathematik,
  124 (2013), pp.~441--470.

\bibitem{Benner2009}
{\sc P.~Benner, R.~C. Li, and N.~Truhar}, {\em On the {ADI} method for
  {S}ylvester equations}, Journal of Computational and Applied Mathematics, 233
  (2009), pp.~1035--1045.

\bibitem{bercovier2015overlapping}
{\sc M.~Bercovier and I.~Soloveichik}, {\em Overlapping non matching meshes
  domain decomposition method in isogeometric analysis}, arXiv preprint
  arXiv:1502.03756,  (2015).

\bibitem{Buffa2013}
{\sc A.~Buffa, H.~Harbrecht, A.~Kunoth, and G.~Sangalli}, {\em
  {BPX}-preconditioning for isogeometric analysis}, Computer Methods in Applied
  Mechanics and Engineering, 265 (2013), pp.~63--70.

\bibitem{Collier2013}
{\sc N.~Collier, L.~Dalcin, D.~Pardo, and V.~M. Calo}, {\em The cost of
  continuity: performance of iterative solvers on isogeometric finite
  elements}, SIAM Journal on Scientific Computing, 35 (2013), pp.~A767--A784.

\bibitem{Collier2012}
{\sc N.~Collier, D.~Pardo, L.~Dalcin, M.~Paszynski, and V.~M. Calo}, {\em The
  cost of continuity: a study of the performance of isogeometric finite
  elements using direct solvers}, Computer Methods in Applied Mechanics and
  Engineering, 213 (2012), pp.~353--361.

\bibitem{Cottrell2009}
{\sc J.~A. Cottrell, T.~J.~R. Hughes, and Y.~Bazilevs}, {\em Isogeometric
  analysis: toward integration of {CAD} and {FEA}}, John Wiley \& Sons, 2009.

\bibitem{cottrell2007studies}
{\sc J.~A. Cottrell, T.~J.~R. Hughes, and A.~Reali}, {\em Studies of refinement
  and continuity in isogeometric structural analysis}, Computer Methods in
  Applied Mechanics and Engineering, 196 (2007), pp.~4160--4183.

\bibitem{Damm2008}
{\sc T.~Damm}, {\em Direct methods and {ADI}-preconditioned {K}rylov subspace
  methods for generalized {L}yapunov equations}, Numerical Linear Algebra with
  Applications, 15 (2008), pp.~853--871.

\bibitem{DeFalco2011}
{\sc C.~De~Falco, A.~Reali, and R.~V{\'a}zquez}, {\em Geopdes: a research tool
  for isogeometric analysis of pdes}, Advances in Engineering Software, 42
  (2011), pp.~1020--1034.

\bibitem{Demmel1997}
{\sc J.~W. Demmel}, {\em Applied numerical linear algebra}, SIAM, 1997.

\bibitem{Deville2002}
{\sc M.~O. Deville, P.~F. Fischer, and E.~H. Mund}, {\em High-order methods for
  incompressible fluid flow}, Cambridge University Press, 2002.

\bibitem{Donatelli2015}
{\sc M.~Donatelli, C.~Garoni, C.~Manni, S.~Serra-Capizzano, and H.~Speleers},
  {\em Robust and optimal multi-iterative techniques for {I}g{A} {G}alerkin
  linear systems}, Computer Methods in Applied Mechanics and Engineering, 284
  (2015), pp.~230--264.

\bibitem{Douglas1962}
{\sc J.~Douglas}, {\em Alternating direction methods for three space
  variables}, Numerische Mathematik, 4 (1962), pp.~41--63.

\bibitem{Ellner1991}
{\sc N.~S. Ellner and E.~L. Wachspress}, {\em Alternating direction implicit
  iteration for systems with complex spectra}, SIAM journal on numerical
  analysis, 28 (1991), pp.~859--870.

\bibitem{Gahalaut2013}
{\sc K.~P.~S. Gahalaut, J.~K. Kraus, and S.~K. Tomar}, {\em Multigrid methods
  for isogeometric discretization}, Computer Methods in Applied Mechanics and
  Engineering, 253 (2013), pp.~413--425.

\bibitem{Gao2013}
{\sc L.~Gao}, {\em Kronecker Products on Preconditioning}, PhD thesis, King
  Abdullah University of Science and Technology, 2013.

\bibitem{Golub2012}
{\sc G.~H. Golub and C.~F. Van~Loan}, {\em Matrix computations}, Johns Hopkins
  University Press, 2012.

\bibitem{Grasedyck2004}
{\sc L.~Grasedyck}, {\em Existence and computation of low {K}ronecker-rank
  approximations for large linear systems of tensor product structure},
  Computing, 72 (2004), pp.~247--265.

\bibitem{Hofreither2015}
{\sc C.~Hofreither, S.~Takacs, and W.~Zulehner}, {\em A robust multigrid method
  for isogeometric analysis using boundary correction}, Tech. Report~33, NFN,
  2015.

\bibitem{Hughes2005}
{\sc T.~J.~R. Hughes, J.~A. Cottrell, and Y.~Bazilevs}, {\em Isogeometric
  analysis: {CAD}, finite elements, {NURBS}, exact geometry and mesh
  refinement}, Computer Methods in Applied Mechanics and Engineering, 194
  (2005), pp.~4135--4195.

\bibitem{Kleiss2012}
{\sc S.~K. Kleiss, C.~Pechstein, B.~J{\"u}ttler, and S.~Tomar}, {\em
  {IETI}--{I}sogeometric tearing and interconnecting}, Computer Methods in
  Applied Mechanics and Engineering, 247 (2012), pp.~201--215.

\bibitem{Kressner2014}
{\sc D.~Kressner, M.~Ple{\v{s}}inger, and C.~Tobler}, {\em A preconditioned
  low-rank {CG} method for parameter-dependent {L}yapunov matrix equations},
  Numerical Linear Algebra with Applications, 21 (2014), pp.~666--684.

\bibitem{Kressner2010}
{\sc D.~Kressner and C.~Tobler}, {\em Krylov subspace methods for linear
  systems with tensor product structure}, SIAM Journal on Matrix Analysis and
  Applications, 31 (2010), pp.~1688--1714.

\bibitem{Li2010}
{\sc B.~W. Li, S.~Tian, Y.~S. Sun, and Z.~M. Hu}, {\em Schur-decomposition for
  3{D} matrix equations and its application in solving radiative discrete
  ordinates equations discretized by {C}hebyshev collocation spectral method},
  Journal of Computational Physics, 229 (2010), pp.~1198--1212.

\bibitem{Lu1991}
{\sc A.~Lu and E.~L. Wachspress}, {\em Solution of {L}yapunov equations by
  alternating direction implicit iteration}, Computers \& Mathematics with
  Applications, 21 (1991), pp.~43--58.

\bibitem{Lynch1964}
{\sc R.~E. Lynch, J.~R: Rice, and D.~H. Thomas}, {\em Direct solution of
  partial difference equations by tensor product methods}, Numerische
  Mathematik, 6 (1964), pp.~185--199.

\bibitem{Peaceman1955}
{\sc D.~W. Peaceman and H.~H. Rachford}, {\em The numerical solution of
  parabolic and elliptic differential equations}, Journal of the Society for
  Industrial \& Applied Mathematics, 3 (1955), pp.~28--41.

\bibitem{Penzl2000}
{\sc T.~Penzl}, {\em A cyclic low-rank {S}mith method for large sparse
  {L}yapunov equations}, SIAM Journal on Scientific Computing, 21 (2000),
  pp.~1401--1418.

\bibitem{Simoncini2013}
{\sc V.~Simoncini}, {\em Computational methods for linear matrix equations}, to
  appear on SIAM Review,  (2013).

\bibitem{Sorber2014}
{\sc L.~Sorber, M.~Van~Barel, and L.~De~Lathauwer}, {\em Tensorlab v2. 0},
  Available online, URL: www. tensorlab. net,  (2014).

\bibitem{Takacs2015}
{\sc S.~Takacs and T.~Takacs}, {\em Approximation error estimates and inverse
  inequalities for {B}-splines of maximum smoothness}, arXiv preprint
  arXiv:1502.03733,  (2015).

\bibitem{Wachspress1962}
{\sc E.~L. Wachspress}, {\em Optimum alternating-direction-implicit iteration
  parameters for a model problem}, Journal of the Society for Industrial \&
  Applied Mathematics, 10 (1962), pp.~339--350.

\bibitem{Wachspress1963}
\leavevmode\vrule height 2pt depth -1.6pt width 23pt, {\em Extended application
  of alternating direction implicit iteration model problem theory}, Journal of
  the Society for Industrial \& Applied Mathematics, 11 (1963), pp.~994--1016.

\bibitem{Wachspress1994}
\leavevmode\vrule height 2pt depth -1.6pt width 23pt, {\em Three-variable
  alternating-direction-implicit iteration}, Computers \& Mathematics with
  Applications, 27 (1994), pp.~1--7.

\bibitem{Wachspress2013}
\leavevmode\vrule height 2pt depth -1.6pt width 23pt, {\em The ADI model
  problem}, Springer, 2013.

\end{thebibliography}

\end{document}